\newtheoremstyle{nospace}
{2pt}   				% ABOVESPACE
{2pt}   				% BELOWSPACE
{\itshape}  			% BODYFONT
{} 		  		    	% INDENT (empty value is the same as 0pt)
{\bfseries} 			% HEADFONT
{.}         			% HEADPUNCT
{5pt plus 1pt minus 1pt}% HEADSPACE
{}          			% CUSTOM-HEAD-SPEC
\theoremstyle{nospace} \newtheorem{theorem}{Theorem}
\theoremstyle{nospace} 
\theoremstyle{nospace} \newtheorem{lemma}[theorem]{Lemma}
\theoremstyle{nospace} \newtheorem{remark}{Remark}
\theoremstyle{nospace} 
\theoremstyle{nospace} 
\theoremstyle{nospace} \newtheorem{corollary}{Corollary}
\theoremstyle{nospace} 
\theoremstyle{nospace} 
\algnewcommand{\IfThen}[2]{% \IfThenElse{<if>}{<then>}{<else>}
  \State \algorithmicif\ #1\ \algorithmicthen\ #2}
\let\OldStatex\Statex
\renewcommand{\Statex}[1][3]{%
  \setlength\@tempdima{\algorithmicindent}%
  \OldStatex\hskip\dimexpr#1\@tempdima\relax}
\title{Trajectory Optimization on Manifolds: A Theoretically-Guaranteed Embedded Sequential Convex Programming Approach}
\author{Riccardo Bonalli, Andrew Bylard, Abhishek Cauligi, Thomas Lew, Marco Pavone % <-this % stops a space
\thanks{R. Bonalli, A. Bylard, A. Cauligi, and M. Pavone are with the Department of Aeronautics and Astronautics, Stanford University, Stanford, CA 94305. \{\tt rbonalli, bylard, acauligi, pavone\} {\tt@stanford.edu}.}
\thanks{T. Lew is with the Institute for Dynamic Systems and Control, ETH Z{\"u}rich, Z{\"u}rich, Switzerland.
%\{\tt lewt\} 
{\tt lewt@student.ethz.ch}.}
\thanks{This work was supported in part by NASA under the Space Technology Research Program, NASA Space Technology Research Fellowship Grants NNX16AM78H and NNX15AP67H, Early Career Faculty Grant NNX12AQ43G and Early Stage Innovations Grant NNX16AD19G, and by KACST.
% Lew
T. Lew is partially supported by the Master's Thesis Grant of the Zeno Karl Schindler Foundation.}
}
\begin{document}

\maketitle
\thispagestyle{empty}
\pagestyle{empty}

%%%%%%%%%%%%%%%%%%%%%%%%%%%%%%%%%%%%%%%%%%%%%%%%%%%%%%%%%%%%%%%%%%%%%%%%%%%%%%%%

\begin{abstract}
Sequential Convex Programming (SCP) has recently gained popularity as a tool for trajectory optimization due to its sound theoretical properties and practical performance. Yet, most SCP-based methods for trajectory optimization are restricted to Euclidean settings, which precludes their application to problem instances where one must reason about manifold-type constraints (that is, constraints, such as loop closure, which restrict the motion of a system to a subset of the ambient space). The aim of this paper is to fill this gap by extending SCP-based trajectory optimization methods to a manifold setting. The key insight is to leverage {\em geometric embeddings} to lift a manifold-constrained trajectory optimization problem into an equivalent problem defined over a space enjoying a Euclidean structure. This insight allows one to extend existing SCP methods to a manifold setting in a fairly natural way. In particular, we present a SCP algorithm for manifold problems with refined theoretical guarantees that resemble those derived for the Euclidean setting, and demonstrate its practical performance via numerical experiments.
\end{abstract}

\IEEEpeerreviewmaketitle

%%%%%%%%%%%%%%%%%%%%%%%%%%%%%%%%%%%%%%%%%%%%%%%%%%%%%%%%%%%%%%%%%%%%%%%%%%%%%%%%
\section{Introduction} 
\label{sec:introduction}

Trajectory optimization is a key problem in robotics, and it has thus been studied extensively through a variety of mathematical frameworks. Examples include sampling-based motion planning techniques  \cite{JansonSchmerlingEtAl2015, KaramanFrazzoli2011,KavrakiSvestkaEtAl1996,LaValle2006,LaValleKuffner2000}, variational approaches such as CHOMP and STOMP  \cite{RatliffZuckerEtAl2009,KalakrishnanChittaEtAl2011}, sum-of-squares methods  \cite{MajumdarTedrake2017,SinghChenEtAl2018}, and sequential convex programming (SCP) techniques such as TrajOpt and GuSTO  \cite{BonalliCauligiEtAl2019,LiuLu2014,MaoSzmukEtAl2016,SchulmanDuanEtAl2014}. Most of these methods, however, are restricted to Euclidean settings, which precludes their application (at least directly) to problem instances where one needs to reason about {\em manifold-type constraints}. For example, such constraints arise when the motion of a robotic system is forced to evolve on subsets of the ambient space (e.g., due to the presence of closed kinematic chains giving rise to loop closure constraints \cite{JailletPorta2016}), which are mathematically modeled as manifolds. Systems having such constraints include quadrotors \cite{AugugliaroSchoelligEtAl2012,MellingerMichaelKumar2012}, robots with camera orientation constraints \cite{ShenMulgaonkarEtAl2013,WattersonLiuEtAl2018}, manipulator systems \cite{KhorasaniKokotovic1985,MurrayLiEtAl1994} and robotic spacecraft \cite{MisraBai2017,Virgili-llopZagarisEtAl2019}, to name a few. For such systems, trajectory optimization methods must ensure that the computed trajectories lie on the relevant manifolds, preventing the planning of infeasible motions. However, this is in general challenging, as  manifold-type constraints are often defined only locally (i.e., through local charts) or as implicit constraints (i.e., constraints of the type $E(x)=0$ where $E$ is a submersion and $x$ is the state vector). As a pedagogical example, consider a two-joint manipulator. Its motion is forced to evolve on the torus $\mathbb{T}^2$ (see Figure \ref{figTorus}), which represents a two-dimensional submanifold of $\mathbb{R}^3$. Specifically, each joint variable ($\theta_i$, $i=1,2$) evolves on the unitary circle $S^1$, and thus the combined evolution is on the Cartesian product $S^1\times S^1$, which is diffeomorphic to $\mathbb{T}^2$ (as characterized by implicit and nonlinear equality constraints).

\begin{figure}[t!]
\centering

\vspace{10pt}

\includegraphics[width=0.95\columnwidth]{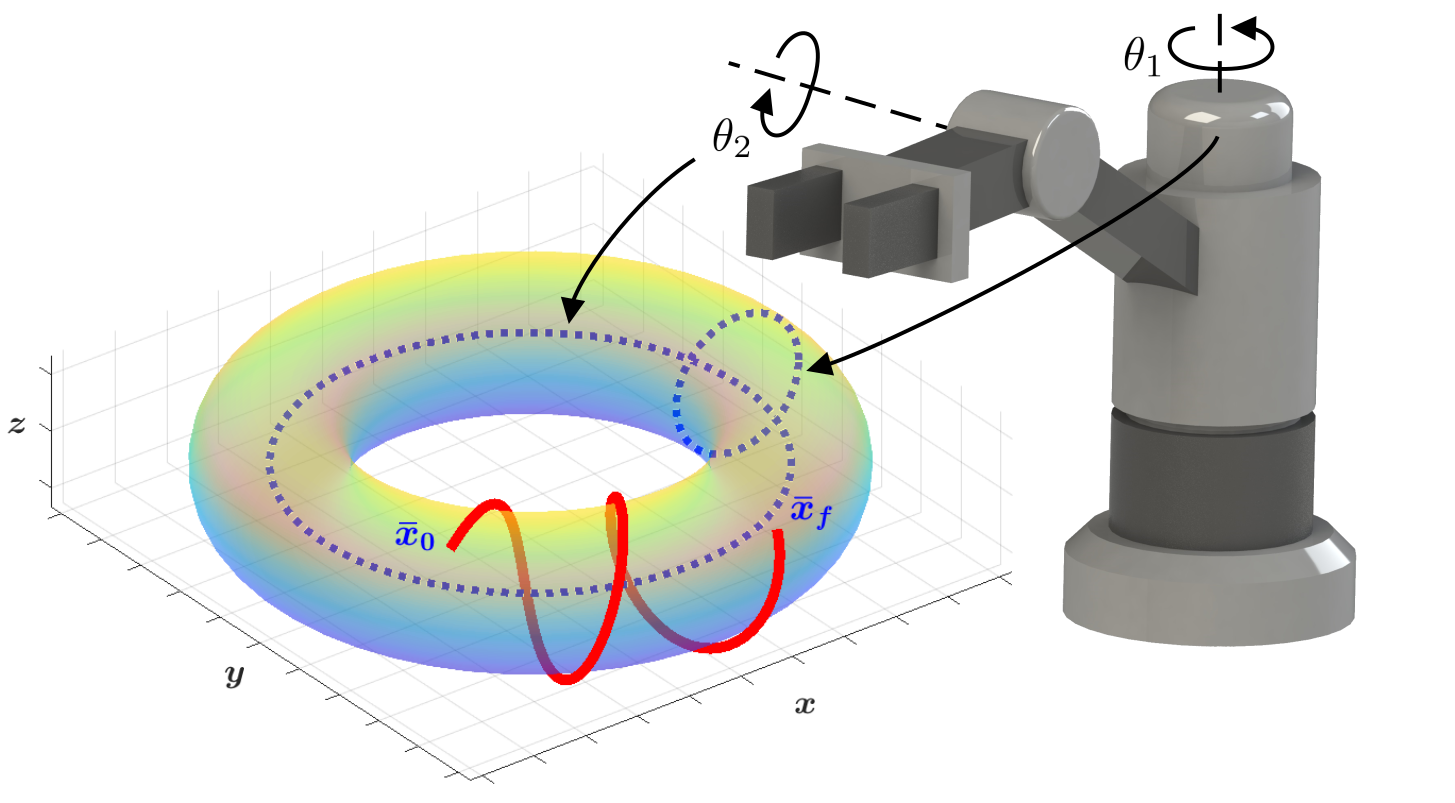}

\caption{The torus $\mathbb{T}^2$ is an embedded submanifold of $\mathbb{R}^3$. The red trajectory, starting from $\bar{x}_0$ and finishing at $\bar{x}_f$, is characterized by dynamics with torus-type constraints and can be seen as an embedded curve in $\mathbb{R}^3$ (Section \ref{subsec:embedding}).} \label{figTorus}

\vspace{-20pt}
\label{fig:torus}
\end{figure}

Despite the ubiquitous presence of manifold constraints in robotic applications, the set of trajectory optimization tools that handle such constraints is relatively limited.  The most na\"ive technique consists of simply removing, without principled justification, all manifold-type constraints, and then solving a relaxed version of the original problem in the resulting Euclidean space. Since this approach cannot guarantee trajectory feasibility, one needs to resort to post-processing before trajectory execution, often using a heuristic correction step which may be unsuccessful. This has prompted the design of optimization approaches that explicitly account for the presence of manifold constraints, including sampling-based techniques leveraging local chart analysis, \cite{SuhUmEtAl2011,JailletPorta2016}, methods employing global chart-gluing procedures \cite{WattersonLiuEtAl2018}, and methods exploiting properties of specific types of manifolds (in particular, Lie groups), such as invariant metrics \cite{WattersonSmithEtAl2016} and projection operators \cite{SacconAguiarEtAl2012,SacconHauserEtAl2013}. These methods, while directly accounting for the presence of manifold constraints, do not  in general enjoy theoretical guarantees, and they only consider a subset of the typical constraints arising in robotic applications (e.g., control or goal region constraints are  generally not addressed).

Building on the recent success of SCP-based techniques for trajectory optimization, the aim of this paper is to provide an SCP-based framework for trajectory optimization on manifolds that (1) enjoys theoretical guarantees in terms of convergence, (2) is general, in that it accounts for a vast class of constraints arising in robotic applications (possibly implicitly defined), and (3) provides effective and reliable practical performance. Specifically, SCP entails successively convexifying the cost function and constraints of a nonconvex optimal control problem, seeking a solution to the original problem through a sequence of convex problems. Its attractiveness is due to high computational speed \cite{AugugliaroSchoelligEtAl2012,SchulmanDuanEtAl2014}, broad applicability \cite{LiuLu2014,Virgili-llopZagarisEtAl2019}, and  (continuous-time) theoretical guarantees \cite{BonalliCauligiEtAl2019,MaoSzmukEtAl2016}. Extending SCP-based methods, primarily developed for Euclidean settings, to manifold-constrained problems is, however, challenging. In particular, when dealing with manifolds, it is challenging to make linearizations (required by SCP schemes that operate on dynamics) well-posed \cite{BulloLewis2007}. The key technical idea of this paper is to leverage {\em geometric embeddings} -- that is, mappings that allow one to recover manifolds as subsets of Euclidean spaces. Leveraging embeddings provides four main advantages. First, it allows one to lift a manifold-constrained problem into an equivalent problem defined over a space enjoying Euclidean structure, where linearizations can be easily computed. Second, embedded problems are often easier to address than their counterparts in local coordinates since, for example, linearity can be partially maintained. Third, embeddings provide a pathway to address implicitly-defined manifolds, as the equality constraints defining them are automatically satisfied in the lifted Euclidean space without the need for explicit enforcement. Fourth, and crucially, for dynamical systems evolving on Lie groups (as it is the case for virtually all robotic systems), there is always a ``natural" embedding that can, at least in principle, be leveraged. Indeed,  any mechanical system can always be identified with a subgroup $G \subseteq \mathbb{R}^{n \times n}$ of a Cartesian product of Lie groups of matrices. Then, the first-order equation that governs the dynamical evolution of the system is
\begin{equation} \label{ref:LieDynamics}
\dot{v}(t) = \Gamma(v(t)) + \sum^{m}_{i=1} u_i(t) \ \textnormal{vlft}(F_i)(v(t)), \quad v \in TG ,
\end{equation}
where $\Gamma$ is the geodesic spray induced by the kinetic energy and $\textnormal{vlft}(F_i)$ is the vertical lift of the generalized force $F_i$ \cite{BulloLewis2004}. This can be reinterpreted as a first-order, control-affine equation with a drift term on the space $G \times \mathfrak{g}$ via the identification $TG \cong G \times \mathfrak{g}$ \cite{BulloLewis2004}, where $\mathfrak{g}$ denotes the Lie algebra of $G$. A natural embedding is then the inclusion $ G \times \mathfrak{g}\subseteq \mathbb{R}^{2 n^2}$ (we provide explicit examples in the rest of the paper). 

{\em Statement of Contributions:} In this paper, we leverage geometric embeddings to extend SCP-based methods for trajectory optimization to manifold-constrained problems. Specifically, the contribution of this paper is fourfold. First, we introduce the notion of embedded SCP, a trajectory optimization method that exploits geometric embeddings to recast optimization on manifolds as a sequence of convex optimal control problems within Euclidean spaces. Importantly, a large number of trajectory optimization problems can be ``naturally" (in the sense above) embedded in Euclidean spaces, which makes this step generally straightforward. Second, leveraging such a reformulation and extending recent results on theoretical guarantees for SCP-based methods in Euclidean spaces \cite{BonalliCauligiEtAl2019}, we provide convergence guarantees for embedded SCP in the sense of the geometric Pontryagin Maximum Principle (PMP) \cite{Pontryagin1987}, i.e., in the sense of convergence of both the solution and corresponding Lagrange multipliers to stationary points satisfying necessary conditions for optimality and complying with the structure of the manifold characterizing the problem. In particular, a key aspect of our theoretical analysis entails showing how one can avoid manifold-type constraints in the optimization process, and yet can still guarantee that the computed solution lies on the manifold -- thus providing a computationally efficient pathway to deal with implicit manifold constraints. Third, by merging techniques from indirect optimal control and differential geometry, we extend the theoretical results to a large variety of settings, e.g., goal region constraints and  pointwise state constraints arising in multitask scenarios. Fourth, again inspired by analogue results in the Euclidean setting \cite{BonalliCauligiEtAl2019}, we harness the insights gained through our theoretical analysis to develop a convergence acceleration scheme for trajectory optimization on manifolds based on shooting methods \cite{Betts1998}.

One must note that while geometric embeddings allow a principled and systematic development of SCP methods for manifold-constrained problems, they come with two key drawbacks. First, embedding the manifold into an Euclidean space requires solving an optimization problem on a space having a higher dimension than the original manifold (albeit of simpler structure). Nevertheless, SCP-based algorithms scale rather well with problem dimensionality, and thus this drawback is offset by the simplification in the problem structure. Second, a straightforward and simple leveraging of embeddings is possible only if globally defined dynamical equations are at our disposal, i.e., it is easy to write
\begin{equation} \label{ref:EuclDyn}
\dot{x} = f(x,u), \ x \in \mathbb{R}^N \ \textnormal{with the additional constraint} \ x \in M
\end{equation}
for some $N \in \mathbb{N}$, where $M$ is a $n$-dimensional manifold. Indeed, though recovering the expression above is always theoretically possible thanks to \eqref{ref:LieDynamics}, it could be hard to practically describe complex dynamics as in \eqref{ref:EuclDyn} (for example, for second-order dynamics that are defined only by local coordinates). Still, most of the systems commonly used in robotics are in the form provided by \eqref{ref:EuclDyn}, which makes possible to efficiently put in practice the method developed in this paper.

{\em Structure of the Paper:} In Section \ref{sec:formulation}, we define the problem of trajectory optimization on manifolds and introduce the tool of embeddings that allows us to reformulate the problem in a standard Euclidean space on which we can proceed by linearization. In Section \ref{sec:algorithmguarantees}, we introduce the embedding trajectory optimization algorithm and the primary theoretical contributions that are geometrically consistent with the structure of the manifold. Finally, Sections \ref{sec:experiments} and \ref{sec:conclusions} present experiments, conclusions and future extensions for this work.

{\em Notation:} We denote by $M$ an $n$-dimensional manifold with tangent bundle $TM$ and cotangent bundle $T^*M$ (both manifolds of dimension $2n$). The tangent space of $M$ at $x \in M$ will be denoted by $T_xM$. Moreover, we recall that a smooth vector field on $M$ is a mapping $f : M \rightarrow TM$ such that $f(x) \in T_xM$, for every $x \in M$. The interested reader is referred to \cite{Lee2003} for related concepts in differential geometry.

%%%%%%%%%%%%%%%%%%%%%%%%%%%%%%%%%%%%%%%%%%%%%%%%%%%%%%%%%%%%%%%%%%%%%%%%%%%%%%%%

\section{Problem Formulation and\\ Sequential Convex Programming on Manifolds}
\label{sec:formulation}

We begin in Section \ref{subsec:ocp} by formulating trajectory optimization for dynamical systems as an optimal control problem on manifolds. Then, in Section \ref{subsec:embedding}, we introduce a procedure for appropriately embedding the original problem on manifolds into an Euclidean space. This allows us to exploit classical SCP frameworks for trajectory optimization in Euclidean spaces to solve the problem in Section \ref{subsec:scp}.

\subsection{Trajectory Optimization on Generic Manifolds}
\label{subsec:ocp}

In this paper we consider a continuous-time formulation to ensure that the theoretical guarantees we derive are independent of the discretization scheme that is employed. A discussion about the impact of discretization schemes on the proposed methodology will be discussed in Section \ref{sec:geometryDiscret}.

Specifically, consider an initial point $\bar{x}_0 \in M$ and smooth mappings $g^i : M \rightarrow \mathbb{R}^{r_i}$, $i=1,\dots,\ell$, which are submersions at 0. Here, $g^i$ represent pointwise state constraints that are used to mathematically model multitask scenarios; in particular, $g^{\ell}$ represents goal region constraints. Without loss of generality, we require $\textnormal{dist}(\bar{x}_0,(g^{\ell})^{-1}(0)) > 0$, where $\textnormal{dist}$ is a point-set distance evaluated w.r.t. some Riemannian metric on $M$. For times $0 < t_1 < \dots < t_{\ell}$, we model the dynamical evolution of the system by the following drift control-affine system in $M$
\begin{eqnarray} \label{ref:AffineDynamics}
\begin{cases}
\displaystyle \dot{x}(t) = f(x(t),u(t)) = f_0(x(t)) + \sum_{j=1}^{m} u^j(t) f_j(x(t))\medskip \\
x(0) = \bar{x}_0 \quad , \quad g^i(x(t_i)) = 0 \ , \ i=1,\dots,\ell, \quad t \in [0,t_{\ell}],
\end{cases}
\end{eqnarray}
where $f_j : M \rightarrow TM$, $j = 0,\dots,m$ are $C^1$ vector fields. The pointwise state constraints $g^i(x(t_i)) = 0$  are useful in multitask scenarios where one seeks to jointly optimize subtrajectories connecting different waypoints. We emphasize that, as previously mentioned, the dynamics of every mechanical system can be written as in Eq. \eqref{ref:AffineDynamics} by substituting the manifold $M$ with its tangent bundle $TM$.

We pose trajectory optimization as an optimal control problem with penalized state constraints. Specifically, we define the Optimal Control Problem (\textbf{OCP}) as minimizing the integral cost
\begin{equation} \label{ref:Cost}
\begin{split}
J(t_{\ell},&x,u) = \int_{0}^{t_{\ell}} f^0(x(t),u(t)) \; \mathrm{d}t = \\
& \int_{0}^{t_{\ell}} \left( \| u(t) \|^2_R + u(t) \cdot f^0_u(x(t)) + g(x(t))\right) \; \mathrm{d}t
\end{split}
\end{equation}
under dynamics and pointwise constraints \eqref{ref:AffineDynamics}, among all control trajectories $u \in L^{\infty}([0,t_{\ell}], \, \mathbb{R}^m)$ satisfying $u(t) \in U$ almost everywhere in $[0,t_{\ell}]$, where the measurable set $U \subseteq \mathbb{R}^{m}$ represents control constraints. Here, $f^0_u : M \rightarrow \mathbb{R}^m$, $g : M \rightarrow \mathbb{R}$ are $C^1$, $\| \cdot \|_R$ is the weighted norm defined by a constant positive-definite matrix $R \in \mathbb{R}^{m \times m}$, and times $t_i$, $i=1,\dots,{\ell}$ are fixed. We remark that hard enforcement of dynamical and intermediate/final goal set constraints is naturally imposed by \eqref{ref:AffineDynamics}. The function $g = g_a + \omega g_b$ accumulates contributions from a purely state-dependant cost $g_a$ and state constraint penalty function $g_b$ (e.g., stemming from collision-avoidance constraints), weighted by $\omega \ge 1$. Penalizing state constraints (e.g., collision avoidance) provides both theoretical and numerical benefits: it allows us to obtain theoretical guarantees in the sense of the Pontryagin Maximum Principle \cite{Pontryagin1987,AgrachevSachkov2004,DmitrukKaganovich2011}, necessary conditions for optimality that are stronger than standard Lagrange multiplier rules (see also Theorem \ref{ref:theoSCP} below), and it provides numerical flexibility by often allowing simple trajectories that violate constraints such as obstacle avoidance to be exploited for initialization. Indeed, given correct design of an SCP algorithm, we can still guarantee that returned solutions satisfy state constraints up to a user-defined tolerance.

A representative example (that will serve as running example) of a trajectory optimization problem evolving on manifolds is the minimum-energy optimal control of a spacecraft avoiding collisions in a microgravity environment, which can be stated as (under the previous formalism, $g$ provides state constraints):
\begin{eqnarray*}
\textnormal{(\textbf{Ex})} \ \begin{cases}
\displaystyle \quad \min \ \int^{t_{\ell}}_0 \Big( \| u_1(t) \|^2 + \| u_2(t) \|^2 + g(r(t),q(t)) \Big) \; \mathrm{d}t \bigskip \\
\displaystyle \ \dot{r} = v \ , \ \dot{v} = \frac{u_1}{m} \ , \ \dot{q} = \frac{1}{2} \Omega(w) q \ , \ \dot{w} = J^{-1}(u_2 - w \times J w) \medskip \\
\ (r,v,q,w)(0) = \bar x_0 \ , \ (r,v,q,w)(t_{\ell}) = \bar x_{\ell} \ , \ (u_1,u_2) \in U \medskip \\
\ (r,v,q,w) \in \mathbb{R}^6 \times S^3 \times \mathbb{R}^3 \ , \ S^3 = \{ q \in \mathbb{R}^4 : \| q \| = 1 \}
\end{cases}
\end{eqnarray*}
where $r$ is the position of the vehicle, $v$ its tangential velocity, $q$ its orientation (expressed via quaternions), $w$ its angular velocity ($\Omega(w)$ is the usual skew-symmetric matrix depending on $w$) and the manifold $S^3 \subseteq \mathbb{R}^4$ characterizes quaternions. Controls are represented by the thrust $u_1$ and the torque $u_2$. A na\"ive way to approach our running example (\textbf{Ex}) would entail removing, without principled justification, the constraint $q \in S^3$, and then solving the relaxed problem in the resulting Euclidean space -- this could result, however, in computation of infeasible trajectories. Better justified approaches could exploit local charts or Lie group properties, as mentioned in Section \ref{sec:introduction}. However, in what follows, we demonstrate another method to tackle the implicit manifold constraint $q \in S^3$, that hinges on embeddings and provides a way to lift SCP methods to manifold-constrained problems.

\subsection{Embedding the Problem into the Euclidean Space} \label{subsec:embedding}

We would like to solve (\textbf{OCP}) via SCP, i.e., by an iterative procedure based on the linearization of all nonlinear mappings around the solution at the previous iteration. This requires us to compose a notion of linearized vector fields of $M$ around curves.
In \cite{Sussman1998,LewisTyner2003,BulloLewis2007}, the authors adapt such a definition on manifolds by recasting differential equations as algebraic equations of operators in $TM$. However, in many applications concerning dynamical systems, $M$ naturally appears as subset of the Euclidean space, in which case the most intuitive linearization is the one operating in the ambient Euclidean space. This insight motivates our approach: to recast (\textbf{OCP}) into an appropriate Euclidean space via geometric embeddings, i.e., mappings $e : M \rightarrow \mathbb{R}^N$ for $N \in \mathbb{N}$, and then to linearize in the ambient space.

Following the previous discussion, we assume that $M$ is a closed submanifold of $\mathbb{R}^N$, for some $N \in \mathbb{N}$. This means that we fix a particular embedding, which is given by the canonical inclusion $e : M \rightarrow \mathbb{R}^N : x \mapsto x$. This choice is made without loss of generality because, due to Whitney-type theorems \cite{Lee2003}, such a mapping always exists. Moreover, consistent with the previous motivating discussion, we assume that the mappings defining (\textbf{OCP}) naturally extend to $\mathbb{R}^N$, i.e., there exist $C^1$ vector fields $F_j : \mathbb{R}^N \rightarrow T\mathbb{R}^N \cong \mathbb{R}^N$, $j=1,\dots,m$ (with an abuse of notation, see \cite{Lee2003}), and $C^1$ functions $F^0_u : \mathbb{R}^N \rightarrow \mathbb{R}$, $G : \mathbb{R}^N \rightarrow \mathbb{R}$ and $G^i : \mathbb{R}^N \rightarrow \mathbb{R}$, $i=1,\dots,{\ell}$, that are smooth submersions at 0, such that $F_j|_M = f_j$, $G^i|_M = g^i$, $F^0_u|_M = f^0_u$ and $G|_M = g$. This setup allows us to transform the dynamics \eqref{ref:AffineDynamics} into the following drift control-affine system in $\mathbb{R}^N$:
\begin{eqnarray} \label{ref:AffineDynamicsRN}
\begin{cases}
\displaystyle \dot{x}(t) = F(x(t),u(t)) = F_0(x(t)) + \sum_{j=1}^{m} u^j(t) F_j(x(t))\medskip \\
x(0) = \bar{x}_0 \in M \subseteq \mathbb{R}^N \quad , \quad G^i(x(t_i)) = 0 \ , \ i=1,\dots,{\ell} .
\end{cases}
\end{eqnarray}
Therefore, (\textbf{OCP}) can be embedded in $\mathbb{R}^N$ by considering the following Embedded Optimal Control Problem (\textbf{EOCP}), which consists of minimizing the integral cost
\begin{equation} \label{ref:CostRN}
\begin{split}
J(t_{\ell},&x,u) = \int_{0}^{t_{\ell}} F^0(x(t),u(t)) \; \mathrm{d}t = \\
& \int_{0}^{t_{\ell}} \left( \| u(t) \|^2_R + u(t) \cdot F^0_u(x(t)) + G(x(t))\right) \; \mathrm{d}t
\end{split}
\end{equation}
under dynamics \eqref{ref:AffineDynamicsRN}, among all control trajectories $u \in L^{\infty}([0,t_{\ell}], \, \mathbb{R}^m)$ satisfying $u(t) \in U$ almost everywhere in $[0,t_{\ell}]$. At this step, it is worth noting that this embedding approach is justified only if solving (\textbf{EOCP}) is equivalent to solve (\textbf{OCP}). Fortunately, this is actually the case: every couple $(x,u)$ is optimal for (\textbf{OCP}) if and only if it is optimal for the embedded problem (\textbf{EOCP}). The validity of the whole scheme hinges on this crucial remark, which is summarized in the statement below,

\begin{lemma}[Embedding Lemma] \label{ref:Microlemma}
A tuple $(x,u)$ is optimal for (\textbf{EOCP}) if and only if it is optimal for (\textbf{OCP}).
\end{lemma}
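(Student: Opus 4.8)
The plan is to show that (\textbf{EOCP}) and (\textbf{OCP}) have the \emph{same} feasible set of trajectory--control pairs and the \emph{same} cost on that set, from which equivalence of optima is immediate. The only nontrivial ingredient is that the embedded dynamics \eqref{ref:AffineDynamicsRN} leave $M$ invariant, so that every admissible trajectory of (\textbf{EOCP}) in fact stays on $M$ and hence is admissible for (\textbf{OCP}).

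First I would fix an admissible control $u \in L^{\infty}([0,t_\ell],\mathbb{R}^m)$ with $u(t) \in U$ a.e., and invoke Carath\'eodory existence--uniqueness for both the intrinsic system \eqref{ref:AffineDynamics} on $M$ and the ambient system \eqref{ref:AffineDynamicsRN} on $\mathbb{R}^N$. Since the vector fields are $C^1$ and $u$ is essentially bounded, the corresponding solutions are absolutely continuous and unique for the initial datum $\bar{x}_0 \in M$.

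The key step is the invariance argument. Let $x_M$ denote the unique solution of the intrinsic system on $M$. Because $f_j$ is a vector field on $M$, we have $\dot{x}_M(t) = f(x_M(t),u(t)) \in T_{x_M(t)}M$ for a.e.\ $t$. Viewing $x_M$ as a curve in $\mathbb{R}^N$ via the inclusion $e$ and using $F_j|_M = f_j$, it follows that $x_M$ satisfies $\dot{x}_M(t) = F(x_M(t),u(t))$ with $x_M(0)=\bar{x}_0$; that is, $x_M$ is an admissible trajectory of \eqref{ref:AffineDynamicsRN}. By uniqueness of solutions in $\mathbb{R}^N$, the ambient trajectory coincides with $x_M$ and therefore lies entirely on $M$. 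Hence the admissible trajectories of (\textbf{EOCP}) and of (\textbf{OCP}) are in one-to-one correspondence --- indeed they are identical as curves via $e$ --- and the intermediate/terminal constraints match, since $G^i|_M = g^i$ forces $G^i(x(t_i)) = g^i(x(t_i))$ whenever $x(t_i) \in M$.

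Finally I would match the costs: because $F^0_u|_M = f^0_u$ and $G|_M = g$, the integrand $F^0$ agrees with $f^0$ along any trajectory lying on $M$, so $J(t_\ell,x,u)$ computed via \eqref{ref:CostRN} equals $J(t_\ell,x,u)$ computed via \eqref{ref:Cost}. With identical feasible sets and identical costs, a pair $(x,u)$ minimizes one cost if and only if it minimizes the other, which yields both implications. The main obstacle is the invariance claim; the delicate point is to handle it at the level of $L^{\infty}$ controls (Carath\'eodory solutions) rather than assuming time-smoothness, and to ensure the ambient solution is defined on all of $[0,t_\ell]$ --- both resolved by the uniqueness identification above, since $x_M$ is globally defined on $[0,t_\ell]$ and the ambient solution must equal it.
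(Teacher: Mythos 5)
Your proposal is correct and follows essentially the same route as the paper: the paper's statement that $F$ is $e$-correlated to $f$, hence $\Phi_F(t,e(x)) = e(\Phi_f(t,x))$, is exactly the invariance-of-$M$ claim that you establish explicitly via uniqueness of Carath\'eodory solutions, and both arguments then conclude by matching feasible sets and costs through the restrictions $G^i|_M = g^i$, $F^0_u|_M = f^0_u$, $G|_M = g$. Your version merely unpacks the flow-correspondence step at the level of $L^\infty$ controls, which is a welcome but not substantively different elaboration.
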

\begin{proof}
The proof makes use of standard tools in differential geometry, whose definitions can be found in \cite{Lee2003} and are omitted here due to space limitations. We retrace the main steps of the proof. Denote by $e : M \rightarrow \mathbb{R}^N$ the canonical inclusion. It follows that $F$ is $e$-correlated to $f$, which implies their flows satisfy $\Phi_F(t,x) = \Phi_F(t,e(x)) = e(\Phi_f(t,x)) = \Phi_f(t,x)$ for every $(t,x) \in \mathbb{R} \times M$ for which the flow $\Phi_f$ is defined. Since $\bar{x}_0 \in M$, we obtain that $(x,u)$ satisfies the dynamics in \eqref{ref:AffineDynamicsRN} if and only if it satisfies the embedded dynamics in \eqref{ref:AffineDynamics}. From this, the optimality of $(x,u)$ follows from the similarity between \eqref{ref:Cost} and \eqref{ref:CostRN}.
\end{proof}

\begin{remark}\label{rem:hard_dyn}
Crucially, from Lemma \ref{ref:Microlemma}, we see that the satisfaction of implicit manifold-type constraints for (\textbf{EOCP}) is induced by hard enforcement of dynamical constraints. Therefore, any numerical strategy used to solve (\textbf{EOCP}) must provide hard enforcement of dynamics -- otherwise, the solution trajectory is not guaranteed to lie on the manifold!
\end{remark}

Let us show how this embedding framework applies to our running example  (\textbf{Ex}). It is sufficient to note that the only components of the dynamics evolving on a manifold are given by the mapping
\begin{equation*}
\displaystyle S^3 \times \mathbb{R}^3 \subseteq \mathbb{R}^7 \rightarrow T(S^3 \times \mathbb{R}^3) : (q,w) \mapsto \Big(\frac{1}{2}\Omega(w)q,J^{-1}(u_2 - w \times J w)\Big),
\end{equation*}
and that this mapping is also defined when $(q,w) \in \mathbb{R}^7$. In other words, the original dynamics is equivalent to $\Big(\frac{1}{2}\Omega(w)q,J^{-1}(u_2 - w \times J w)\Big)$ restricted to the subset $S^3 \times \mathbb{R}^3$. Therefore, the embedded dynamics related to this mapping are exactly the same but extended on $\mathbb{R}^7$, which shows that the embedded version of our example problem coincides with the original (\textbf{OCP}). Luckily, for many robotics applications which include trajectory optimization on manifolds, (\textbf{EOCP}) is equivalent to (\textbf{OCP}), which is also the case when formulation \eqref{ref:LieDynamics} is met. This is among the main motivations for developing such an embedded framework (see also our discussion at the end of Section \ref{subsec:ocp}).

\subsection{Reformulating Problem (\textbf{OCP}) via SCP in Euclidean Space} \label{subsec:scp}

Given that (\textbf{EOCP}) evolves in the Euclidean space, we may solve it using SCP.
Below, we describe a particular SCP formulation that enjoys geometrically consistent theoretical convergence guarantees.

Under the assumption that $U$ is convex, we iteratively linearize the nonlinear contributions of (\textbf{EOCP}) around local solutions, thus recursively defining a sequence of simplified problems. Specifically, at the end of iteration $k$, assume we have some continuous curves $x_k : [0,t_{\ell}] \rightarrow \mathbb{R}^N$ and $u_k : [0,t_{\ell}] \rightarrow \mathbb{R}^m$, continuously extended in the interval $(0,+\infty)$. Then, at iteration $k+1$, the Linearized Embedded Optimal Control Problem (\textbf{LEOCP})$_{k+1}$ consists of minimizing the new cost
\begingroup
\begin{multline} \label{ref:Cost_k}
J_{k+1}(t_{\ell},x,u) = \int_{0}^{t_{\ell}} F^0_{k+1}(t,x(t),u(t)) \; \mathrm{d}t = \medskip \\
\int_{0}^{t_{\ell}} \Big( \| u(t) \|^2_R + h_k(\| x(t) - x_k(t) \|^2 - \Delta_k) \Big) \; \mathrm{d}t \\
+ \int_{0}^{t_{\ell}} u(t) \cdot \left( F^0_{u}(x_k(t)) + \frac{\partial F^0_{u}}{\partial x}(x_k(t)) \cdot (x(t) - x_k(t)) \right) \; \mathrm{d}t \\
+ \int_{0}^{t_{\ell}} \left( G_k(x_k(t)) + \frac{\partial G_k}{\partial x}(x_k(t)) \cdot (x(t) - x_k(t)) \right) \; \mathrm{d}t
\end{multline}
\endgroup
where, consistent with the notation of Section \ref{subsec:embedding}, $G_k = G_a + \omega_k G_b$ and $h_k(s)$ is any smooth approximation of $\max\{0,s\}$ \cite[Chapter 10]{Lee2003}. Function $h_k$ provides trust-region guarantees on the {\em updates} in the state trajectories and constraints via the bounds $0 \le \Delta_k \le \Delta_0$ and weights $1 \le \omega_0 \le \omega_k \le \omega_{\max}$. The dynamical constraint for (\textbf{LEOCP})$_{k+1}$ is
\begingroup
\begin{eqnarray} \label{ref:AffineDynamics_k}
\begin{cases}
\displaystyle \dot{x}(t) = F_{k+1}(t,x(t),u(t)) = \medskip \\
\displaystyle\left( F_0(x_k(t)) + \sum_{j=1}^{m} u^j(t) F_j(x_k(t)) \right) \medskip \\
\displaystyle + \left( \frac{\partial F_0}{\partial x}(x_k(t)) + \sum_{j=1}^{m} u^j_k(t) \frac{\partial F_j}{\partial x}(x_k(t)) \right) \cdot (x(t) - x_k(t)) \medskip \\
x(0) = \bar{x}_0 \in M \quad , \quad G^i_{k+1}(x(t_i)) = G^i(x_k(t_i)) + \medskip \\
\displaystyle \hspace{20pt} \frac{\partial G^i}{\partial x}(x_k(t_i)) \cdot (x(t_i) - x_k(t_i)) = 0 \ , \ i=1,\dots,{\ell},
\end{cases}
\end{eqnarray}
\endgroup
obtained from the linearized expansion of all nonlinear mappings. We minimize among all controls $u \in L^{\infty}([0,t_{\ell}],  \, \mathbb{R}^m)$ satisfying $u(t) \in U$ almost everywhere in $[0,t_{\ell}]$. Inductively, the curves $x_{k+1}:[0,t_{\ell}]\rightarrow \mathbb{R}^N$ and $u_{k+1}: [0,t_{\ell}]\rightarrow \mathbb{R}^{m}$ are defined as the optimal solution for problem (\textbf{LEOCP})$_{k+1}$, continuously extended in the interval $(0,+\infty)$. Ideally, SCP algorithms may vary $\Delta_k$ and $\omega_k$ at each iteration to smoothen the process towards convergence, for example as in \cite{BonalliCauligiEtAl2019}.

A convexified formulation similar to \eqref{ref:Cost_k}-\eqref{ref:AffineDynamics_k} has already been introduced in \cite{BonalliCauligiEtAl2019}. However, we stress the fact that this new formulation deals with the presence of the manifold $M$ and of pointwise state constraints. The introduction of these two new features necessitates a considerable revision of the proof of theoretical guarantees (see the \iftoggle{ext}{Appendix}{extended version \cite{BonalliBylardEtAl2019}}).

The sequence of problems (\textbf{LEOCP})$_{k}$ is well-posed if, for each iteration $k \ge 1$, an optimal solution for (\textbf{LEOCP})$_{k}$ exists. For this, we consider the following assumptions:

\begin{itemize}
\item[$(A_1)$] The set $U$ is compact and convex. Moreover, the differentials of mappings $G^i$, $i=1,\dots,{\ell}$, are of full rank.
\item[$(A_2)$] Mappings $f^0$, $g$, vector fields $f_j$, $j = 0,\dots,m$ and their differentials have compact supports (and do $F^0$, $G$, $F_j$).
\item[$(A_3)$] At every iteration $k \ge 1$, problem (\textbf{LEOCP})$_{k}$ is feasible.
\end{itemize}

Under these assumptions, classical existence Filippov-type arguments \cite{Filippov1962,LeeMarkus1967} (applied to the reduced form of (\textbf{LEOCP})$_{k}$, see also the \iftoggle{ext}{Appendix}{extended version \cite{BonalliBylardEtAl2019}})
show that, at each iteration $k \ge 1$, the problem (\textbf{LEOCP})$_{k}$ has at least one optimal solution. We remark that similar assumptions have been considered in \cite{BonalliCauligiEtAl2019}; in the present contribution, $(A_1)$-$(A_3)$ gather the assumptions in \cite{BonalliCauligiEtAl2019} and appropriately adapt them to the context of manifolds and pointwise state constraints. Comments on their validity for very general trajectory optimization problems are easily adapted from \cite[Section II.B]{BonalliCauligiEtAl2019}.

Coming back to our running example problem (\textbf{Ex}), since we have already proved that the embedded problem coincides with (\textbf{Ex}), the linearization technique above applies directly to (\textbf{Ex}) without any additional step. This is particularly useful and happens every time the embedded problem is equivalent to the original one, which is common in trajectory optimization as highlighted previously. We remark that in nearly all scenarios having natural control constraints $U$, Assumptions $(A_1)$-$(A_3)$ are easily satisfied by (\textbf{Ex}).

%%%%%%%%%%%%%%%%%%%%%%%%%%%%%%%%%%%%%%%%%%%%%%%%%%%%%%%%%%%%%%%%%%%%%%%%%%%%%%%%

\section{Algorithm Overview and Theoretical Guarantees} \label{sec:algorithmguarantees}
In Section \ref{subsec:algo}, we detail a general algorithm for the solution of (\textbf{OCP}) which combines SCP-based procedures with the embedded framework defined previously. Its convergence guarantees, in the sense of the Pontryagin Maximum Principle \cite{Pontryagin1987}, are studied in Section \ref{subsec:convGar} to \ref{sec:geometryDiscret}, where we show that these respect the original structure of the manifold in (\textbf{OCP}), despite solving a sequence of linearized versions of the embedded problem. Notably, this procedure allows one to solve (\textbf{OCP}) on manifolds defined implicitly via nonlinear equalities {\em without explicit representation}.

\subsection{SCP-based Trajectory Optimization on Manifolds} \label{subsec:algo}

\begin{algorithm}[h!]
\caption{Embedded SCP (E-SCP)}\label{ref:algoSCP}
%\DontPrintSemicolon
\begin{algorithmic}[1]
\State {\bf Input: }{Trajectory $x_0$ and control $u_0$ defined in $(0,\infty)$.}
\State {\bf Output: }{Solution for (\textbf{LEOCP})$_{k}$ at iteration $k$.}
\State {\bf Data: }{Parameters for the used SCP procedure.}
\State    Transform (\textbf{OCP}) into problem (\textbf{EOCP}) in the Euclidean space, as in Section \ref{subsec:embedding};
 \State   Linearize (\textbf{EOCP}) by defining a sequence of convex problems (\textbf{LEOCP})$_{k}$ (an example of such linearization is given in Section \ref{subsec:scp});
 \State   Select a SCP procedure on Euclidean spaces to solve the sequence of problems (\textbf{LEOCP})$_{k}$ for $(x_{k},u_{k})$;
 \State   \Return{$(x_{k},u_{k})$ at the last iteration.}
\end{algorithmic}
\end{algorithm}

The first two steps in the algorithm above consist of transforming (\textbf{OCP}) into the optimal control problem (\textbf{EOCP}) on the Euclidean space via embedding procedures and successively linearizing it as detailed in Section \ref{subsec:embedding}. In the third step, one finally applies some SCP scheme on Euclidean spaces. It is important to remark that E-SCP provides the user with the freedom to choose any sequential convex procedure to solve the sequence of problems (\textbf{LEOCP})$_{k}$. However, we show in Section \ref{subsec:convGar} that specific choices of SCP (e.g. using hard enforcement of dynamical constraints) provide theoretical guarantees for E-SCP which are also consistent with the presence of the manifold within the original problem (\textbf{OCP}).

Problem (\textbf{LEOCP})$_{1}$ is linearized around an initial curve tuple $(x_0,u_0)$, where these initialization curves should be as close as possible to a feasible or even optimal curve for (\textbf{LEOCP})$_{1}$, although we do not require that $(x_0,u_0)$ is feasible for the embedded problem (\textbf{EOCP}). This allows one to initialize E-SCP with simple, even infeasible, guesses for solutions of (\textbf{EOCP}), such as a straight line in the manifold, as detailed in \cite[Section III.A]{BonalliCauligiEtAl2019}.

\subsection{Necessary Conditions for Optimality} \label{subsec:convGar}
Although the strategy for solving problems (\textbf{LEOCP})$_{k}$ in E-SCP is up to the user, we show that specific choices of solvers allow one to recover important theoretical guarantees for the convergence of E-SCP to critical points for the original problem (\textbf{OCP}) on manifolds. Specifically, we can show the convergence of E-SCP towards a trajectory satisfying first-order necessary conditions for optimality under the Pontryagin Maximum Principle \cite{Pontryagin1987} when the sequence of problems (\textbf{LEOCP})$_{k}$ is chosen as in Section \ref{subsec:scp}. However, we must adapt the proof for the classical Euclidean setting to take into account the presence of both manifold and pointwise state constraints. This will be done by leveraging fundamental results from differential geometry and optimal control, i.e., Hamiltonian systems and the Pontryagin Maximum Principle with pointwise state constraints. For self-containtment, we summarize some of these results in the following discussion (see, e.g., \cite{Lee2003,AgrachevSachkov2004,DmitrukKaganovich2011} for an extended treatment).

For consistency with the existing presentation of these results, denote $f^0$ as a time-varying function, i.e., $f^0 = f^0(t,x,u)$, and fix some measurable control time-series $u : \mathbb{R} \rightarrow \mathbb{R}^m$. Thanks to Assumptions $(A_1)$, $(A_2)$, for every $t_0 \in \mathbb{R}$, the trajectories arising for the augmented system
\begin{eqnarray} \label{ref:AugmentedAffineDynamics}
\begin{cases}
\displaystyle \dot{x}^0(t) = f^0(t,x(t),u(t)), \quad \dot{x}(t) = f(t,x(t),u(t))\medskip \\
(x^0,x)(t_0) = (0,y_0), \quad (x^0,x)(t) \in \mathbb{R} \times M
\end{cases}
\end{eqnarray}
exist in $\mathbb{R}$ for every $y_0 \in M$. Notice that $x^0(t)$ is simply the accumulated cost at time $t$. Therefore, the flow of system \eqref{ref:AugmentedAffineDynamics} is defined for every $(t_0,y_0) \in \mathbb{R} \times M$ and we denote it by $\exp_{(f^0_u,f_u)}(\cdot;\cdot,\cdot) : \mathbb{R}^2 \times M \rightarrow \mathbb{R} \times M$, such that $\exp_{(f^0_u,f_u)}(\cdot;t_0,y_0)$ represents the trajectory of \eqref{ref:AugmentedAffineDynamics} starting from $(0,y_0)$ at $t_0$.

By standard identifications, we denote $(p^0,p) \in T^*(\mathbb{R} \times M)$ and we define the Hamiltonian function related to \eqref{ref:AugmentedAffineDynamics} as
\begin{multline} \label{ref:AugmentedHamiltonian}
H : \mathbb{R} \times T^*(\mathbb{R} \times M) \times \mathbb{R}^m \rightarrow \mathbb{R} \\
(t,p^0,p,u) \mapsto \langle p , f(t,\pi(p),u) \rangle + p^0 f^0(t,\pi(p),u)
\end{multline}
where $\pi : T^*M \rightarrow M$ is the canonical projection and $\langle \cdot , \cdot \rangle$ denotes the duality in $T^*M$. As a classical result on Hamiltonian systems \cite{AgrachevSachkov2004,Lee2003}, for every $(t,u) \in \mathbb{R}^{m+1}$, one can uniquely associate to \eqref{ref:AugmentedHamiltonian} the so-called Hamiltonian vector field $\overset{\rightarrow}{H}(t,\cdot,\cdot,u) : T^*(\mathbb{R} \times M) \rightarrow T(T^*(\mathbb{R} \times M))$ by the rule $\sigma_{(p^0,p)}(\cdot,\overset{\rightarrow}{H}(t,\cdot,\cdot,u)) = d_{(p^0,p)}H(t,\cdot,\cdot,u)$, $\sigma$ being the canonical symplectic form of $T^*(\mathbb{R} \times M)$ (see, e.g., \cite{AgrachevSachkov2004}). Combining the classical geometric Pontryagin Maximum Principle \cite{AgrachevSachkov2004} with the reduction scheme for pointwise state constraints developed in \cite{DmitrukKaganovich2011} yields the following extended geometric Pontryagin Maximum Principle (see \iftoggle{ext}{Appendix}{extended version \cite{BonalliBylardEtAl2019}} for a proof sketch).

\begin{theorem}[Geometric Pontryagin Maximum Principle with Pointwise State Constraints] \label{ref:theo_PMPGEO}
Let $x$ be an optimal trajectory for (\textbf{OCP}), associated with the control $u$ in $[0,t_{\ell}]$ and with fixed interior and final times $t_i$, $i = 1,\dots,{\ell}$. There exists a nonpositive constant scalar $p^0$ and a piecewise absolutely continuous function $p : [0,t_{\ell}] \rightarrow T^*M$, called the adjoint vector, satisfying $p(t) \in T^*_{x(t)}M$, with $(p^0,p) \neq 0$, such that, almost everywhere in $[0,t_{\ell}]$, the following relations hold:
\begin{itemize}
\item \textbf{Adjoint Equations}
\begin{eqnarray} \label{ref:adjointEqGEO}
\displaystyle \frac{d(p^0,p)}{dt}(t) = \overset{\rightarrow}{H}(t,p^0,p(t),u(t))
\end{eqnarray}
\item \textbf{Maximality Condition}
\begingroup
\begin{equation} \label{ref:maxCondGEO}
H(t,p^0,p(t),u(t)) = \underset{u \in U}{\max} \ H(t,p^0,p(t),u)
\end{equation}
\endgroup
\item \textbf{Transversality Conditions}

For each $i=1,\dots,{\ell}-1$, the adjoint vector satisfies
\begin{equation} \label{ref:trasv1GEO}
\displaystyle p(t^-_i) - p(t^+_i) \perp \textnormal{ker} \ \frac{\partial g^i}{\partial x}(x(t_i)) \ , \ p(t_{\ell}) \perp \textnormal{ker} \ \frac{\partial g^{\ell}}{\partial x}(x(t_{\ell})) .
\end{equation}
\end{itemize}
The tuple $(p^0,p,u)$ is a (Pontryagin) extremal. We say that $(p^0,p,u)$ is normal if $p^0 \neq 0$, and abnormal otherwise.
\end{theorem}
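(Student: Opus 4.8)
The plan is to derive the statement by combining two classical ingredients in the geometric setting: the intrinsic Pontryagin Maximum Principle on manifolds \cite{AgrachevSachkov2004}, and the state-augmentation (reduction) scheme for intermediate pointwise constraints due to Dmitruk and Kaganovich \cite{DmitrukKaganovich2011}. The chief difficulty is that (\textbf{OCP}) carries the constraints $g^i(x(t_i)) = 0$ at several interior instants $t_1 < \dots < t_{\ell-1}$ in addition to the terminal condition at $t_\ell$, whereas the classical geometric PMP is stated for problems with only initial/terminal conditions. My first step is therefore to transform (\textbf{OCP}) into an equivalent single-interval problem with pure endpoint constraints, to which the standard machinery applies.

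Concretely, first I would split the optimal trajectory $x$ into its $\ell$ restrictions $x_i := x|_{[t_{i-1},t_i]}$ (with $t_0 = 0$) and, after an affine reparametrization of each subinterval onto a common interval, regard $X := (x_1,\dots,x_\ell)$ as a single trajectory on the product manifold $M^\ell$, whose cotangent bundle factors as $T^*(M^\ell) \cong \prod_{i=1}^\ell T^*M$. The dynamics \eqref{ref:AffineDynamics} lift componentwise to $M^\ell$ (with each factor scaled by its subinterval length), and the interior data are encoded as endpoint constraints on $X$: the continuity (gluing) requirements $x_i(t_i) = x_{i+1}(t_i)$ together with the state constraints $g^i(x(t_i)) = 0$ assemble into a single constraint map on the endpoints of $X$. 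This reduction preserves optimality --- $X$ is optimal for the product problem if and only if $x$ is optimal for (\textbf{OCP}) --- which is the content of the Dmitruk--Kaganovich construction transcribed to manifolds.

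Second, I would apply the geometric PMP of \cite{AgrachevSachkov2004} to the reduced product problem. This produces a nontrivial pair $(p^0, P)$ with $P = (p_1,\dots,p_\ell)$ a lift of $X$ into $T^*(M^\ell)$ solving Hamilton's equations generated by the lifted Hamiltonian, satisfying maximality in the (stacked) control, and satisfying transversality against the endpoint constraint of the product problem. Because the lifted Hamiltonian is the sum of the per-factor Hamiltonians, each depending only on its own control, maximization over the stacked control decouples into per-factor maximization; undoing the reparametrization, the componentwise Hamilton equations and the per-factor maximality reassemble into the single adjoint equation \eqref{ref:adjointEqGEO} and the single maximality condition \eqref{ref:maxCondGEO} for the concatenated covector $p$. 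The intrinsic construction guarantees $p(t) \in T^*_{x(t)}M$ throughout, and $p$ is absolutely continuous on each open subinterval $(t_{i-1},t_i)$ while being only piecewise absolutely continuous across the joints, where the gluing enters.

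Finally, I would read off the transversality conditions \eqref{ref:trasv1GEO}. The endpoint transversality of the product problem splits according to the structure of the gluing-plus-constraint map: differentiating the continuity condition couples $p(t_i^-)$ and $p(t_i^+)$, while differentiating $g^i$ contributes a term valued in the annihilator of $\ker (\partial g^i/\partial x)(x(t_i))$, and together these yield precisely the interior jump relation $p(t_i^-) - p(t_i^+) \perp \ker (\partial g^i/\partial x)(x(t_i))$, with the terminal factor giving $p(t_\ell) \perp \ker (\partial g^\ell/\partial x)(x(t_\ell))$. I expect the main obstacle to lie in this last bookkeeping: disentangling the single transversality condition on $T^*(M^\ell)$ into the stated jump and terminal conditions in an intrinsic, chart-free manner, while tracking that the nontriviality $(p^0,p) \neq 0$ and the sign $p^0 \le 0$ survive the reduction --- in particular, that a nondegenerate multiplier on the product cannot collapse to a trivial $(p^0,p)$ on the original problem. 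Here the full-rank hypothesis on the differentials of the $g^i$ in $(A_1)$ is exactly what makes the endpoint constraint map on $M^\ell$ a submersion, which guarantees this nontriviality transfer and closes the argument.
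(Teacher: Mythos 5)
Your proposal is correct and follows essentially the same route the paper prescribes for this theorem: the Dmitruk--Kaganovich reduction (chopping the trajectory at the interior times, reparametrizing onto a common interval, passing to a product problem with gluing-plus-constraint endpoint conditions) followed by the classical geometric PMP, with the decoupled maximality and the split transversality read off exactly as in the paper's Appendix construction of (\textbf{RLEOCP}) for $\ell=2$. The only cosmetic difference is that you carry out the reduction intrinsically on $M^\ell$ while the Appendix performs it in the embedded space $\mathbb{R}^N$ and projects afterward, which if anything makes your nontriviality step easier since no projection can annihilate the multiplier.
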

It is important to recall that the statement of optimality in Theorem \ref{ref:theo_PMPGEO} supersedes the classical Euclidean case as it additionally addresses pointwise state constraints (which consequently imply discontinuity of the adjoint vector), in context of {\em nonlinear} manifolds.  
These features considerably complexify the proof (cf. \iftoggle{ext}{Appendix}{extended version \cite{BonalliBylardEtAl2019}}).

\subsection{Convergence with Geometric Consistency}
The convergence of E-SCP can be inferred by leveraging one further commonly adopted regularity assumption concerning optimal controls:

\begin{itemize}
\item[$(A_4)$] At every iteration $k \ge 1$, the optimal control $u_k$ of (\textbf{LEOCP})$_{k}$ is piecewise continuous in every subinterval $[0,t_1]$ and $[t_{i-1},t_{i}]$ for $i=2,\dots,{\ell}$.
\end{itemize}
The key convergence result is stated next:

\begin{theorem}[Convergence Theorem] \label{ref:theoSCP}
Suppose that $(A_1)$-$(A_4)$ hold. Given any sequence of trust region radii and weights $((\Delta_k,\omega_k))_{k \in \mathbb{N}} \subseteq [0,\Delta_0] \times [\omega_0,\omega_{\max}]$, let $((x_k,u_k))_{k \in \mathbb{N}}$ be any sequence such that, for every $k \ge 1$, $(x_k,u_k)$ is optimal for (\textbf{LEOCP})$_{k}$ in $[0,t_{\ell}]$, where problems (\textbf{LEOCP})$_{k}$ are built as detailed in Section \ref{subsec:scp}. Up to some subsequence:
\begin{itemize}
\item $x_k \rightarrow \tilde x \in C^0([0,t_{\ell}],M)$, for the strong topology of $C^0$, and
\item $u_k \rightarrow \tilde u \in L^{\infty}([0,t_{\ell}],U)$, for the weak topology of $L^2$,
\end{itemize}
where $(\tilde x,\tilde u)$ is feasible for the original problem (\textbf{OCP}). Moreover, there exists a nonpositive constant scalar $\tilde p^0$ and a piecewise absolutely continuous function $\tilde \gamma : [0,t_{\ell}] \rightarrow \mathbb{R}^N$, with $(\tilde p^0,\tilde \gamma) \neq 0$, such that the tuple $((\tilde p^0,\tilde p) = \textnormal{Pr}(\tilde p^0,\tilde x,\tilde \gamma),\tilde u)$, where $\textnormal{Pr}$ is the orthogonal projection of $T^*\mathbb{R}^{N+1}|_{\mathbb{R} \times M}$ onto $T^*(\mathbb{R} \times M)$ (cf. \iftoggle{ext}{Appendix}{extended version \cite{BonalliBylardEtAl2019}}), represents a geometric Pontryagin extremal (in the sense of Theorem \ref{ref:theo_PMPGEO}) for the original problem (\textbf{OCP}) on $M$. In particular, as $k$ tends to infinity, up to some subsequence:
\begin{itemize}
\item $p^0_k \rightarrow \tilde p^0$, and
\item $\gamma_k|_{[t_i,t_{i+1})} \rightarrow \tilde \gamma|_{[t_i,t_{i+1})} \in C^0([t_i,t_{i+1}),\mathbb{R}^N)$, $i=1,\dots,{\ell}-1$, and $\gamma_k|_{[t_{{\ell}-1},t_{\ell}]} \rightarrow \tilde \gamma|_{[t_{{\ell}-1},t_{\ell}]} \in C^0([t_{{\ell}-1},t_{\ell}],\mathbb{R}^N)$ for the strong topology of $C^0$,
\end{itemize}
where $(p^0_k,x_k,\gamma_k,u_k)$ is a Pontryagin extremal of (\textbf{LEOCP})$_{k}$.
\end{theorem}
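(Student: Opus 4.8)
The plan is to mirror the Euclidean convergence analysis of \cite{BonalliCauligiEtAl2019}, but to carry the extra manifold and pointwise-constraint structure through every limiting argument, and then to project the resulting ambient extremal onto $T^*M$. First I would establish compactness. Under $(A_1)$--$(A_2)$ the right-hand side of the linearized dynamics \eqref{ref:AffineDynamics_k} is uniformly bounded along the iterates (the vector fields, their differentials, and $U$ all have compact support/are compact), so the curves $x_k$ are uniformly bounded and equi-Lipschitz; Arzel\`a--Ascoli then yields $x_k \to \tilde x$ in $C^0$ up to a subsequence. Since $u_k(t)\in U$ with $U$ compact convex, the controls are bounded in $L^2$, hence admit a weak-$L^2$ limit $u_k \rightharpoonup \tilde u$, with $\tilde u(t)\in U$ a.e.\ by Mazur's lemma. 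I would extract all subsequences jointly (diagonal argument) so that the linearization point $x_k$ and the iterate $x_{k+1}$ share the same $C^0$-limit $\tilde x$.

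Next I would pass to the limit in the dynamics to obtain feasibility. Because linearization point and iterate converge to the common limit $\tilde x$, the correction terms $\frac{\partial F_j}{\partial x}(x_k)\cdot(x_{k+1}-x_k)$ in \eqref{ref:AffineDynamics_k} vanish uniformly, while the affine-in-$u$ terms pass to the limit by weak-strong convergence ($u_k\rightharpoonup\tilde u$ weakly in $L^2$ tested against $F_j(x_k)\to F_j(\tilde x)$ strongly). Hence $(\tilde x,\tilde u)$ satisfies the nonlinear embedded dynamics \eqref{ref:AffineDynamicsRN} together with $\tilde x(0)=\bar x_0$ and the limiting constraints $G^i(\tilde x(t_i))=0$. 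Invoking Lemma \ref{ref:Microlemma} and Remark \ref{rem:hard_dyn} -- hard enforcement of the exactly-satisfied nonlinear dynamics with $\bar x_0\in M$ forces the trajectory to remain on $M$ -- gives $\tilde x\in C^0([0,t_\ell],M)$ and feasibility of $(\tilde x,\tilde u)$ for (\textbf{OCP}).

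I would then handle the multipliers. Each (\textbf{LEOCP})$_k$ is convex (quadratic cost, affine dynamics), so its PMP produces an ambient costate $\gamma_k:[0,t_\ell]\to\mathbb{R}^N$ solving a linear adjoint ODE with coefficients built from $\partial_x F(x_k)$, a scalar $p^0_k\le 0$, and jump relations at each $t_i$ of the form $\gamma_k(t_i^-)-\gamma_k(t_i^+)=(\partial_x G^i)^\top\lambda_i^k$ dictated by the pointwise constraints. After normalizing $(p^0_k,\gamma_k)$, Gronwall estimates (coefficients bounded by $(A_2)$) bound the costates and jump multipliers; continuous dependence of linear ODEs on their data then yields, up to a further subsequence, piecewise-$C^0$ limits $p^0_k\to\tilde p^0$ and $\gamma_k\to\tilde\gamma$ on each $[t_i,t_{i+1})$, with $(\tilde p^0,\tilde\gamma)\neq 0$. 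The adjoint equation, the jump/transversality relations, and the maximality condition pass to the limit; for maximality I would exploit the strong convexity of the cost in $u$ (the term $\|u\|_R^2$), which makes the Hamiltonian maximizer a continuous function of $(\gamma_k,x_k)$ and thus upgrades control convergence enough to validate \eqref{ref:maxCondGEO} in the limit. This produces a Pontryagin extremal $(\tilde p^0,\tilde\gamma,\tilde u)$ for the ambient problem (\textbf{EOCP}).

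Finally -- and this is the step I expect to be the main obstacle -- I would establish geometric consistency by projecting onto $T^*(\mathbb{R}\times M)$. Setting $(\tilde p^0,\tilde p)=\mathrm{Pr}(\tilde p^0,\tilde x,\tilde\gamma)$, the crux is a lemma stating that the orthogonal projection intertwines the ambient adjoint dynamics with the intrinsic Hamiltonian vector field $\overset{\rightarrow}{H}$ of \eqref{ref:adjointEqGEO} on $T^*M$. This rests on the fact that the $F_j$ are tangent to $M$ (equivalently, $e$-relatedness of $F$ and $f$, cf.\ Lemma \ref{ref:Microlemma}), so that the normal component of $\tilde\gamma$ decouples from the tangential evolution and only the tangential part $\tilde p$ drives the cotangent flow; the maximality condition survives projection because $\langle\tilde\gamma,F(x,u)\rangle=\langle\mathrm{Pr}(\tilde\gamma),F(x,u)\rangle$ whenever $F(x,u)\in T_xM$, and the transversality \eqref{ref:trasv1GEO} follows since $G^i|_M=g^i$ sends the annihilator spanned by $(\partial_x G^i)^\top$ into $(\ker\partial_x g^i)^\perp$ inside $T^*_{x(t_i)}M$. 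The delicate points are proving this intertwining rigorously for an embedded (nonlinear) submanifold and verifying that the projection preserves nontriviality, i.e.\ that $(\tilde p^0,\tilde p)\neq 0$ rather than collapsing to a purely normal, dynamically irrelevant costate; I would address the latter by tracking the normalization through the adjoint ODE and using that the normal component is slaved to the tangential data along trajectories constrained to $M$.
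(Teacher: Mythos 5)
Your overall architecture (ambient compactness, limit passage in the linearized dynamics, construction of an ambient extremal, then orthogonal projection onto $T^*(\mathbb{R}\times M)$) matches the paper's, and your final "intertwining" step is essentially the projection lemma the paper proves. The main structural difference is the treatment of the pointwise constraints $g^i(x(t_i))=0$: you work directly with a multi-point PMP and posit jump conditions $\gamma_k(t_i^-)-\gamma_k(t_i^+)=(\partial_x G^i)^\top\lambda_i^k$, whereas the paper first applies the reduction of \cite{DmitrukKaganovich2011} --- chopping each trajectory at the times $t_i$, rescaling each piece to $[0,1]$, and stacking the pieces into a single higher-dimensional problem (\textbf{RLEOCP})$_k$ with the linking condition $a(1)=b(0)$ --- so that the classical two-point PMP and, crucially, the already-established Euclidean SCP convergence result \cite[Theorem III.1]{BonalliCauligiEtAl2019} can be invoked essentially as black boxes. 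Your route is viable in principle, but it forces you to reprove both of those ingredients.

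That is where the two genuine gaps sit. First, your claim that a diagonal extraction makes the linearization point $x_k$ and the iterate $x_{k+1}$ share a common $C^0$-limit, so that the correction terms $\partial_x F_j(x_k)\cdot(x_{k+1}-x_k)$ vanish, is not a routine subsequence argument: passing to a subsequence $k_j$ yields limits of $x_{k_j}$ and of $x_{k_j-1}$ separately, and nothing a priori identifies them. This identification is the analytical heart of the Euclidean convergence theorem and is precisely what the paper imports from \cite{BonalliCauligiEtAl2019} (with a modification for the linearized goal sets and rescaled time) rather than re-deriving. Second, your mechanism for nontriviality of the projected covector (``the normal component is slaved to the tangential data'') is not the one that works: the ambient adjoint ODE does not allow you to recover the normal part of $\tilde\gamma$ from $\tilde p$ alone. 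The paper argues by contradiction using the transversality conditions together with the geometric identity $T_{x_i}(G^i)^{-1}(0)+T_{x_i}M=\mathbb{R}^N$, a consequence of $T_{x_i}(G^i)^{-1}(0)\cap T_{x_i}M=T_{x_i}(g^i)^{-1}(0)$ and the Grassmann formula (see \eqref{ref:intersectionTangentSpace}--\eqref{ref:sumTangentSpace}): if $(\tilde p^0,\tilde p)=0$, then $\tilde\lambda(t_\ell)$ annihilates both $T_{\tilde x(t_\ell)}M$ and $T_{\tilde x(t_\ell)}(G^\ell)^{-1}(0)$, hence all of $\mathbb{R}^N$, so $\tilde\gamma(t_\ell)=0$ and likewise the jumps vanish; backward uniqueness for the linear adjoint ODE then forces $(\tilde p^0,\tilde\gamma)=0$, a contradiction. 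Without that identity --- which is where the hypothesis that the extensions $G^i$ are submersions enters --- the nontriviality of the geometric extremal does not follow.
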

Due to space limitations, we present the proof of Theorem \ref{ref:theoSCP} in the \iftoggle{ext}{Appendix}{extended version \cite{BonalliBylardEtAl2019}}. In short, Theorem \ref{ref:theoSCP} asserts that there exists a sequence of solutions for problems (\textbf{LEOCP})$_{k}$ that converges (under appropriate topologies) to a critical point for (\textbf{OCP}), in the (strong) sense of Theorem \ref{ref:theo_PMPGEO}. Importantly, the limiting trajectory lies on $M$, despite solving the linearized, embedded versions (\textbf{LEOCP})$_{k}$ (see Figure \ref{fig:surface}). The termination properties of Algorithm E-SCP are stated next:
\begin{figure}[t!]
\centering
\includegraphics[width=0.85\columnwidth]{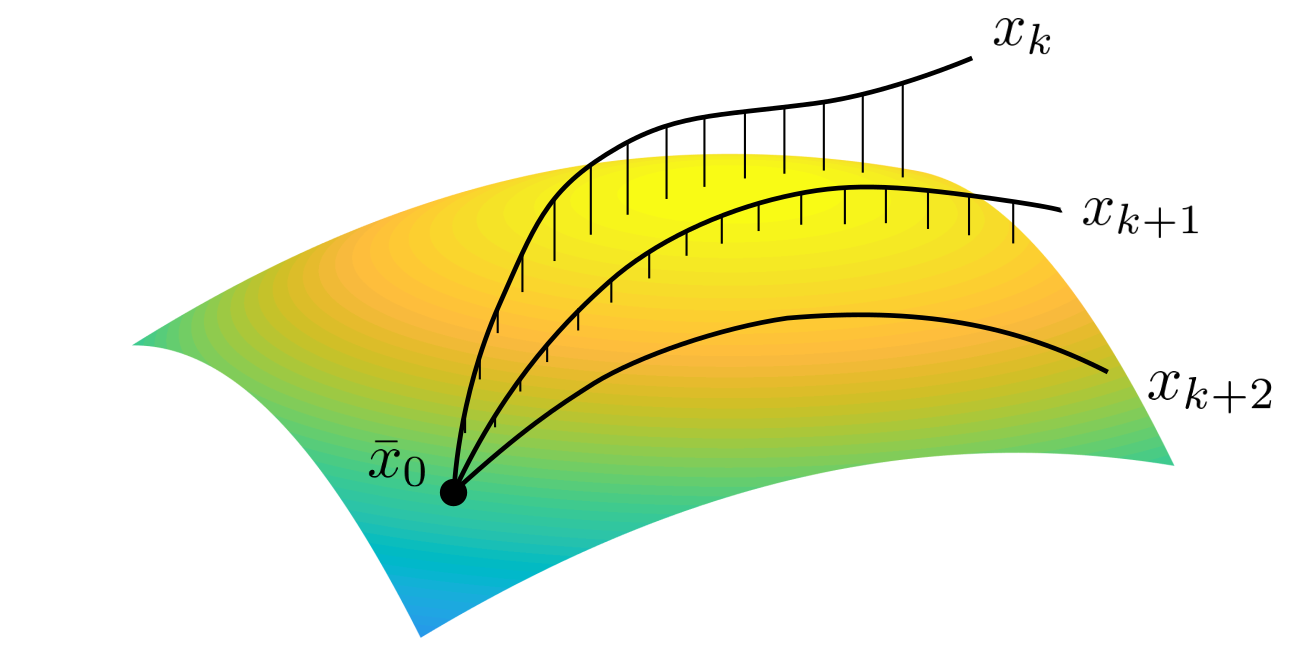}
\caption{(\textbf{EOCP}) uses hard enforcement of dynamical constraints to guarantee convergence to trajectories which lie on the manifold of the system, without explicitly enforcing or penalizing manifold constraints.} \label{figTorus}
\label{fig:surface}
\vspace{-15pt}
\end{figure}
\begin{corollary}[E-SCP Termination] \label{ref:corollSCP}
Assume (i) $(A_1)$-$(A_4)$ hold, (ii) problems (\textbf{LEOCP})$_{k}$ are built as detailed in Section \ref{subsec:scp}, (iii) the SCP procedure adopted for E-SCP (line \textbf{6} of Algorithm \ref{ref:algoSCP}) enforces hard dynamical constraints (as opposed to a penalized implementation -- see Remark~\ref{rem:hard_dyn}), (iv) $\Delta_k \in [0,\Delta_0]$ for all $k$ and $\omega_k < \omega_{\max}$ are chosen such that for every iteration $k \ge 1$, an output $(x_k,u_k)$ is always provided, and (v) the algorithm is terminated if $\omega_k > \omega_{\max}$. Then, in solving (\textbf{OCP}) by E-SCP only three mutually exclusive situations arise:
\begin{enumerate}
\item There exists an iteration $k \ge 1$ for which $\omega_k > \omega_{\max}$. Then, E-SCP terminates, providing a solution for (\textbf{LEOCP})$_{k}$ satisfying only soft state constraints.
\item There exists an iteration $k \ge 0$ for which $(x_{k+1},u_{k+1}) = (x_{k},u_{k})$. Then, E-SCP terminates, providing a stationary point, in the sense of the Pontryagin Maximum Principle, for the original problem (\textbf{OCP}).
\item We have $(x_{k+1},u_{k+1}) \neq (x_{k},u_{k})$, for every iteration $k \ge 0$. Then, E-SCP builds a sequence of optimal solutions for (\textbf{LEOCP})$_{k}$ that has a subsequence converging (with respect to appropriate topologies) to a stationary point, in the sense of the Pontryagin Maximum Principle, for the original problem (\textbf{OCP}).
\end{enumerate}
\end{corollary}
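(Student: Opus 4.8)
The plan is to organize the argument around the logical trichotomy that exhausts the possible behaviors of the E-SCP iteration, and then to dispatch each case in turn. First, I would observe that the three situations are mutually exclusive and collectively exhaustive by construction: either the penalty weight $\omega_k$ crosses its ceiling $\omega_{\max}$ at some iteration (Case 1), or it never does, in which case the iteration either reaches a fixed point where two consecutive iterates coincide (Case 2) or produces an infinite sequence of strictly distinct iterates (Case 3). Since hypothesis (iv) guarantees that an output $(x_k,u_k)$ is always produced while $\omega_k < \omega_{\max}$, the only way the procedure can halt short of a fixed point is through the termination rule (v). Case 1 then follows immediately from (v): once $\omega_k > \omega_{\max}$, E-SCP halts and returns the solution of the corresponding (\textbf{LEOCP})$_k$; because state constraints are enforced only through the penalty term $\omega_k G_b$ in the cost \eqref{ref:Cost_k} and never as hard constraints, a solution obtained before $\omega_k$ is large enough to drive the violation to zero satisfies these constraints only in the soft, penalized sense, while the hard dynamical and goal constraints remain satisfied by construction.

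Case 3 is the most direct. If the iterates never coincide, the algorithm generates an infinite sequence $((x_k,u_k))_{k \in \mathbb{N}}$, and since hypotheses (i)--(ii) place us precisely in the setting of Theorem \ref{ref:theoSCP}, I would simply invoke that result to extract a subsequence converging, in the stated topologies, to a tuple that represents a geometric Pontryagin extremal for (\textbf{OCP}) in the sense of Theorem \ref{ref:theo_PMPGEO}; the manifold membership of the limit $\tilde x$ is furnished by the hard enforcement of dynamics assumed in (iii), via Remark \ref{rem:hard_dyn} and Lemma \ref{ref:Microlemma}.

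The crux, and the step I expect to be the main obstacle, is Case 2, where I would show that a fixed point $(x_{k+1},u_{k+1}) = (x_k,u_k)$ is automatically a Pontryagin extremal for (\textbf{OCP}). The key observation is that the linearization is exact at the point about which it is taken: evaluating the linearized dynamics $F_{k+1}$, running cost $F^0_{k+1}$, and constraints $G^i_{k+1}$ at $x = x_k$ recovers precisely the original nonlinear quantities $F$, $F^0$, $G^i$ together with their first-order data at $x_k$, while the trust-region penalty $h_k(\| x - x_k \|^2 - \Delta_k)$ has vanishing gradient at $x_k$. Applying the Euclidean PMP to the convex problem (\textbf{LEOCP})$_{k+1}$ yields an extremal $(p^0_k,x_k,\gamma_k,u_k)$, and the exactness identities above force its adjoint equations and maximality condition to collapse onto those of the embedded problem (\textbf{EOCP}); projecting the Euclidean adjoint $\gamma_k$ via $\textnormal{Pr}$ onto $T^*(\mathbb{R} \times M)$ and invoking the equivalence of Lemma \ref{ref:Microlemma} then recovers the geometric conditions of Theorem \ref{ref:theo_PMPGEO} for (\textbf{OCP}). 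The delicate part is verifying that this transfer is consistent: one must check that the transversality conditions across the interior times $t_i$ and the maximality condition are preserved under the exactness identities, and that $\textnormal{Pr}$ correctly realizes the manifold-constrained adjoint $p(t) \in T^*_{x(t)}M$. This mirrors the corresponding fixed-point step in the Euclidean analysis of \cite{BonalliCauligiEtAl2019}, but requires the geometric machinery of Section \ref{subsec:convGar} to respect the nonlinear structure of $M$.
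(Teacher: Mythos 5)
Your handling of Cases 1 and 3 coincides with the paper's, but for Case 2 you take a genuinely different route. The paper disposes of the fixed-point case with a single device: it applies Theorem \ref{ref:theoSCP} to the eventually-constant sequence $\bigl((x_0,u_0),\dots,(x_{k-1},u_{k-1}),(x_k,u_k),(x_k,u_k),\dots\bigr)$, which is a legitimate input to that theorem since each term is optimal for its (\textbf{LEOCP}); every subsequence converges to $(x_k,u_k)$, so the theorem's conclusion immediately certifies $(x_k,u_k)$ as a geometric Pontryagin extremal for (\textbf{OCP}), with the adjoint projection, the interior-time transversality transfer, and the nontriviality of $(\tilde p^0,\tilde p)$ all inherited for free. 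You instead re-derive the fixed-point stationarity directly: exactness of the linearization at $x_k$, vanishing gradient of $h_k(\|x-x_k\|^2-\Delta_k)$ at $x=x_k$, collapse of the linearized PMP conditions onto those of (\textbf{EOCP}), and projection via $\textnormal{Pr}$. This is a valid classical argument and arguably more transparent about \emph{why} a fixed point is stationary, but it forces you to redo, in the constant-sequence special case, exactly the delicate geometric bookkeeping (consistency of transversality under $\textnormal{Pr}$ and, in particular, nontriviality of the projected multiplier, which the paper's appendix settles by contradiction using $\mathbb{R}^N = T_{x_i}(G^i)^{-1}(0) + T_{x_i}M$) that the proof of Theorem \ref{ref:theoSCP} already packages; you flag this as ``the delicate part'' but leave it open. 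Adopting the paper's reduction makes Case 2 a one-line corollary of the same machinery you already invoke for Case 3.
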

\begin{proof}
\noindent The assumptions on the internal SCP procedure for E-SCP imply that only these three cases may happen and that they are mutually exclusive. Moreover, case 3) is a direct consequence of Theorem \ref{ref:theoSCP} and, for case 2), it is sufficient to apply Theorem \ref{ref:theoSCP} to the sequence of solutions
$$
\Big( (x_0,u_0),(x_1,u_1),\dots,(x_{k-1},u_{k-1}),(x_k,u_k),(x_k,u_k),(x_k,u_k),\dots \Big)
$$
because it clearly converges to $(\tilde x,\tilde u) = (x_k,u_k)$.
\end{proof}
In case 1), SCP fails because no feasible strategies can be computed, which also occurs in other state-of-the-art trajectory optimization solvers such as TrajOpt \cite{SchulmanDuanEtAl2014}. Since the convergence of numerical methods usually leverages a termination threshold, case 3) is the most common outcome. Fortunately, Theorem \ref{ref:theoSCP} ensures that we are converging to a stationary point that satisfies strong necessary conditions for optimality in the sense of Theorem \ref{ref:theo_PMPGEO} and manifold-type constraints.

\subsection{Geometric Consistency and Discrete-Time Convergence} \label{sec:geometryDiscret}

Despite the fact that E-SCP entails solving a sequence of linearized problems for the embedded reformulation (\textbf{EOCP}), i.e., without explicit representation of the manifold, Lemma \ref{ref:Microlemma} and Corollary \ref{ref:corollSCP} ensure that the numerical solution converges to trajectories that satisfy the manifold constraints and also ensure that the limiting solution satisfies strong first-order necessary conditions for optimality that respect the geometric structure of the original manifold. 

The additional advantage of working within a continuous-time setting is that the validity of the theoretical guarantees is independent of the time-discretization scheme used to solve the ODEs. For instance, choosing schemes such as Euler or Simpson's rule lead to well well-posed convex optimization problems when the dynamics are linearized within each SCP step. With a sufficiently small time-step, one can ensure that the discrete solution provided by SCP stays close to the solution of the continuous-time problem (\textbf{EOCP}). Finally, Corollary \ref{ref:corollSCP} further implies that this discretized solution stays close to the solution for the original continuous-time (\textbf{OCP}).

\section{Convergence Acceleration via\\ Differential Shooting Method} \label{subsec:shooting}

An important result provided by Theorem \ref{ref:theoSCP} is the convergence of Pontryagin extremals related to the sequence of solutions for problem (\textbf{LEOCP})$_{k}$ in the Euclidean space towards a Pontryagin extremal related to the solution for (\textbf{OCP}) in the manifold, found by E-SCP. As a consequence, we can extend the acceleration procedure proposed in \cite{BonalliCauligiEtAl2019} to the manifold and pointwise state constraint case, i.e., warm-starting shooting methods \cite{Betts1998} with E-SCP.

The key idea is that since the convergence of the adjoint vectors is provided in $\mathbb{R}^N$, one can leverage them to re-state a shooting method on the manifold within $\mathbb{R}^N$. However, unlike the framework proposed in \cite{BonalliCauligiEtAl2019}, the main difficulty concerns the pointwise state constraints that introduce discontinuities for the multipliers. Fortunately, we can still use this hybrid method by leveraging the knowledge of adjoint vectors at intermediate times. Assuming SCP is converging, the Lagrange multipliers $\lambda^i_k$ related to the pointwise condition $G^{i}_k(x(t_i)) = 0$ for the finite dimensional discretization of problems (\textbf{LEOCP})$_{k}$ approximate the values $\gamma_k(t_i)$ of the adjoint vectors related to the continuous-time (\textbf{LEOCP})$_{k}$ (see \cite{GollmannKernEtAl2008} and the problem reduction provided in the \iftoggle{ext}{Appendix}{extended version \cite{BonalliBylardEtAl2019}}). Then, up to some subsequence, for every small $\delta > 0$, there exists an iteration $k_{\delta} \ge 1$ for which, for every iteration $k \ge k_{\delta}$, one has $\| \tilde \gamma(t_i) - \lambda^i_k \| < \delta$, $i=1,\dots,{\ell}$, where $\tilde \gamma$ is an adjoint vector related to the solution of (\textbf{OCP}) found by SCP (see Theorem \ref{ref:theoSCP}). This means that, starting from some iteration $k \ge k_{\delta}$, we can run a shooting method to solve (\textbf{OCP}), initializing using $\lambda^i_k$, $i=1,\dots,{\ell}$. At each iteration of SCP, we use the values $\lambda^i_k$ provided by the solver to initialize the shooting method until convergence is achieved. This provides a theoretically guaranteed method to accelerate convergence for SCP towards a more accurate solution.

%%%%%%%%%%%%%%%%%%%%%%%%%%%%%%%%%%%%%%%%%%%%%%%%%%%%%%%%%%%%%%%%%%%%%%%%%%%%%%%%

\section{Numerical Experiments and Discussion}
\label{sec:experiments}
In this section, we provide implementation details and examples to demonstrate various facets of our approach. We focus on three important aspects: (1) providing comparisons between E-SCP and standard SCP on Euclidean spaces, (2) providing comparisons between E-SCP and state-of-the-art algorithms for trajectory optimization, and (3) analyzing convergence acceleration for E-SCP via shooting methods.

Simulations are provided by considering two problems: our running example (\textbf{Ex}), defined in Section \ref{subsec:ocp}, and a trajectory optimization problem for a 7 degree-of-freedom manipulator in a cluttered environment. Consistent with our embedding framework, rather than describing the manipulator via joint angle variables (i.e., local variables), the states are characterized by tuples $(x_i,y_i) \in \mathbb{S}^1$, $i=1,\dots,7$ (one for each joint), so that the system evolves in the $7$-dimensional torus, i.e., $(x_1,y_1,\dots,x_7,y_7) \in \mathbb{T}^7$, which is naturally embedded in $\mathbb{R}^{14}$. The dynamics are given by the kinematic equations of a manipulator, that is, each joint $i$ satisfies:
\begin{equation} \label{ref:dynManip}
f_i : \mathbb{T}^1 \times \mathbb{R} \rightarrow T \mathbb{T}^1 : (x_i,y_i,u_i) \mapsto (-y_i u_i,x_i u_i)
\end{equation}
where each $u_i \in \mathbb{R}$ is a control variable. As for our running example (\textbf{Ex}), \eqref{ref:dynManip} naturally represents a dynamical system in $\mathbb{R}^2$, so that, problem (\textbf{EOCP}) coincides with (\textbf{OCP}).

The examples and algorithms presented in this work were implemented in the Julia programming language \cite{BezansonKarpinskiEtAl2012} using the GuSTO.jl package located at \texttt{\url{https://github.com/StanfordASL/GuSTO.jl}}, with optimization problems solved using Gurobi \cite{ios_gurobi2016}. We chose GuSTO \cite{BonalliCauligiEtAl2019} as the SCP procedure for E-SCP (line \textbf{4} of Algorithm \ref{ref:algoSCP}), so that Corollary \ref{ref:corollSCP} held.
For each compared SCP method, the continuous-time optimal control problem was discretized using a trapezoidal approximation of the dynamics, assuming a zero-order hold for the control, and the discrete-time cost considered for each problem was the energy $\sum_{k=1}^{d-1} ||u_k||_2^2 \Delta t$, where $d$ is the number of discretization points for the trajectory. Additionally, obstacle avoidance constraints served as our non-convex state constraints. For each set of simulations presented, we report results for 50 experiments with different start and goal configurations. A SCP trial is marked as successful if the algorithm converged and the resulting solution was collision-free. We used the Bullet Physics engine to calculate signed distances for obstacle avoidance constraints or penalties \cite{Coumans, Ericson2004}. For each experiment, the variables were initialized using straight-line initializations on the manifold.

\subsection{Comparison with State-of-the-Art}

In this section, we demonstrate the benefits obtained when using E-SCP to solve trajectory optimization problems on manifolds, rather than standard SCP approaches. More specifically, the main advantage of E-SCP is that, when considering hard enforcement of dynamical constraints, the limiting numerical solution is guaranteed to lie on the manifold, even if such a constraint is not explicitly enforced. This approach is in contrast to enforcing the nonlinear manifold-type equality constraints that appear when using standard Euclidean-based SCP approaches. Since added equality constraints increase the complexity of the problem and may adversely affect efficiency, removing manifold-type constraints in SCP provides greater flexibility in solving the sequential problems, given that these constraints are implicitly satisfied.

For this comparison, we considered the 7-DoF manipulator using 120 discretization points over a trajectory time of 30 seconds \cite{LaValleKuffner2000}. The achieved results are shown in Table \ref{table:fullresults}. Here, comparisons are given between E-SCP with GuSTO as the internal solver, TrajOpt \cite{SchulmanDuanEtAl2014} without any enforced manifold-type constraints, and versions of GuSTO and TrajOpt where the manifold-type constraints are penalized (denoted by SCP and TrajOpt-P, respectively, in Table \ref{table:fullresults}).

E-SCP has the best performance in reduction of dynamical constraint error (even if negligible). As can be expected, the methods that explicitly penalize the presence of the manifold achieve the highest precision for manifold-constraint satisfaction (even if also negligible). However, a keen analysis of Table \ref{table:fullresults} shows that this comes with the tradeoff that the additional state-constraint penalties produce a greater tendency to fall into high-cost local minima, since the penalization affects the way the internal convex optimization algorithm reduces the cost. Consequently, the resulting true cost of the non-penalizing algorithms is on average lower than their penalizing counterparts. In other words, E-SCP provides better optimal solutions than state-of-the-art penalization approaches.

In addition, we note that although unit-norm constraints like $x \in \mathbb{T}^n$ can be easily formulated as penalty expressions and satisfied through penalization, more complex manifold constraints (e.g., those associated with SO(3), closed-kinematic chains, etc.) are more difficult to formulate and satisfy in this way, and would be better handled using E-SCP. Indeed, from Theorem \ref{ref:theoSCP} we see that manifold constraint error in E-SCP scales with the dynamical constraint error, which in turn depends only on the discretization scheme. Thus, we expect the negligible manifold constraint error for E-SCP in the previous example to carry over to more complex manifolds.

\newcolumntype{F}{>{\centering\arraybackslash}p{1.9cm}}
\newcommand{\Centerstackmoda}[1]{\addstackgap[2pt]{\Centerstack{#1}}}
\newcommand{\Centerstackmodb}[1]{\addstackgap[4pt]{\Centerstack{#1}}}
\newcommand{\Centerstackmodc}[1]{\addstackgap[10pt]{\Centerstack{#1}}}
\newcolumntype{G}{>{\centering\arraybackslash}p{1.2cm}}
\newcolumntype{H}{>{\centering\arraybackslash}p{0.4cm}}
\newcolumntype{I}{>{\centering\arraybackslash}p{0.92cm}}

\begin{centering}
\begin{table}[t]
\small
\centering
\vspace{-5pt}
\begin{tabular}{| F | G | G | G | G |}
\hline
 & \Centerstackmodb{E-SCP} & \Centerstackmodb{SCP} & \Centerstackmodb{TrajOpt} & \Centerstackmodb{TrajOpt-P} \\
\hline
\Centerstackmodb{Dynamical {constraint error}} & \Centerstackmodb{{$1.9$}{$\times 10^{-3}$}} & \Centerstackmodb{{$5.8$}{$\times 10^{-3}$}} & \Centerstackmodb{{$8.5$}{$\times 10^{-3}$}} & \Centerstackmodb{{$4.8$}{$\times 10^{-3}$}} \\
\hline
\Centerstackmoda{{$\mathbb{T}^7$ manifold} {constraint error}} & \Centerstackmodb{{$3.9$}{$\times 10^{-2}$}} & \Centerstackmodb{{$3.6$}{$\times 10^{-6}$}} & \Centerstackmodb{{$1.6$}{$\times 10^{-2}$}} & \Centerstackmodb{{$3.1$}{$\times 10^{-6}$}} \\
\hline
\Centerstackmoda{True cost} & 0.105 & 1.0 & 0.215 & 0.792 \\
\hline
\Centerstackmoda{Computation time} & 0.466 & 0.129 & 0.321 & 1.0 \\
\hline
\end{tabular}
\caption{Averaged results of manipulator arm experiments, with normalized cost, computation times and non-normalized values for constraint errors.}
\vspace{-15pt}
\label{table:fullresults}
\end{table}
\end{centering}

\subsection{Convergence Acceleration via Shooting Methods}
In this section, we provide numerical simulations that highlight convergence benefits that are obtained when E-SCP is combined with shooting methods. In particular, we consider trajectory optimization of a spacecraft having the dynamics and embedding given in (\textbf{Ex}) and navigating through a highly cluttered environment, using 100 discretization points over 50-second trajectories. As shown in Table \ref{table:shooting}, the results are very promising: on average, the shooting method cuts significantly the number of SCP iterations required to converge to a trajectory, resulting in an overall 59.4\% increase in speed. The difference in performance is made more stark by the fact that the shooting method can occasionally converge in cases where SCP is unable to converge at all due to the use of naive in-collision straight-line initialization. Indeed, it is also interesting to note that in cases where the shooting method provides the final convergence, the final trajectory cost is always lower (if sometimes only slightly) than the cost returned by SCP alone, as the shooting method relies on the Newton method and thus tends to achieve a much higher proximity to the optimal on convergence than SCP.

\newcommand{\addstackgapmod}[1]{\addstackgap[2pt]{#1}}
\newcolumntype{D}{>{\centering\arraybackslash}p{1.8cm}}
\newcommand{\Centerstackmodd}[1]{\addstackgap[4pt]{\Centerstack{#1}}}
\newcolumntype{E}{>{\centering\arraybackslash}p{2.5cm}}

\begin{centering}
\begin{table}[h!]
\small
\centering
\vspace{-5pt}
\begin{tabular}{| E || D | D |}
\hline
 & \Centerstackmodd{E-SCP Only} & \Centerstackmodd{{E-SCP +} Shooting} \\
\hline
\addstackgapmod{SCP Iterations} & 32 & 13 \\
\hline
\addstackgapmod{Reported Cost} & 0.221 & 0.176  \\
\hline
\end{tabular}
\caption{Averaged results of experiments using a shooting method to accelerate the convergence of E-SCP, while resulting in lower trajectory cost than using E-SCP alone.}
\vspace{-15pt}
\label{table:shooting}
\end{table}
\end{centering}

%%%%%%%%%%%%%%%%%%%%%%%%%%%%%%%%%%%%%%%%%%%%%%%%%%%%%%%%%%%%%%%%%%%%%%%%%%%%%%%%

\section{Conclusions}
\label{sec:conclusions}

In this paper we provided an SCP-based method for trajectory optimization with manifold constraints. Our key insight was to leverage geometric embeddings to lift a manifold-constrained trajectory optimization problem into an equivalent problem defined over a space enjoying Euclidean structure, where SCP can be readily applied. We derived sound theoretical guarantees and validated the proposed methodology via numerical experiments. Among other benefits, our method can easily accommodate implicitly-defined manifold constraints.

This work opens the field to many future avenues of research. First, we plan to study the setting with free final times $t_i$, $i=1,\dots,\ell$, both from a theoretical and numerical standpoint. Second, related to the previous direction, it is of interest to design accurate numerical schemes that can handle multi-shooting methods and free final time settings. Third, we would like to leverage techniques from Lie group theory to improve performance when applying E-SCP to such specific class of submanifolds. Finally, we plan to evaluate our method on hardware platforms, such as robotic manipulators and test beds for free-flying robotic spacecraft.

%%%%%%%%%%%%%%%%%%%%%%%%%%%%%%%%%%%%%%%%%%%%%%%%%%%%%%%%%%%%%%%%%%%%%%%%%%%%%%%%

% \bibliographystyle{IEEEtran}
\bibliographystyle{plainnat}
\bibliography{main,ASL_papers}

\iftoggle{ext}{\section*{Appendix}
\label{sec:appendix}

\subsection{Proof of Theorem \ref{ref:theoSCP}}

The proof consists of two main steps. First, we apply the reduction scheme in \cite{DmitrukKaganovich2011} to each linearized problems (\textbf{LEOCP})$_k$: every convexified optimal control problem with pointwise state constraints is converted into an optimal control problems without such constraints but with higher dimension. We apply the SCP convergence result provided by \cite[Theorem III.1]{BonalliCauligiEtAl2019} to this class of problems, recovering an extremal for the embedded problem (\textbf{EOCP}). Then, we project the obtained adjoint vector getting a geometric extremal for the original optimal control problem (\textbf{OCP}).

\begin{remark} \label{remarkAdjoint}
Considering Darboux coordinates $(y,\xi)$ for $T^*(\mathbb{R} \times M)$, equations \eqref{ref:adjointEqGEO} locally take the common form \cite{AgrachevSachkov2004}
\begin{eqnarray} \label{ref:adjointDarboux}
\begin{cases}
\displaystyle \dot{y}(t) = \frac{\partial H}{\partial \xi}(t,p^0,y(t),\xi(t),u(t)) \medskip \\
\displaystyle \dot{\xi}(t) = -\frac{\partial H}{\partial y}(t,p^0,y(t),\xi(t),u(t)) \ .
\end{cases}
\end{eqnarray}
In particular, when $M = \mathbb{R}^N$, the adjoint equations \eqref{ref:adjointDarboux} are globally equivalent to the adjoint equations \eqref{ref:adjointEqGEO}. Moreover, the constancy of $p^0$ arises from \eqref{ref:adjointDarboux} and the fact that the Hamiltonian \eqref{ref:AugmentedHamiltonian} does not depend on the variable $x^0$. This facts will be used in what follows.
\end{remark}

%\subsection{Pontryagin Maximum Principle for Problems Evolving on Manifolds and with Pointwise State Constraints}

\subsection{Reduction of (\textbf{LEOCP})$_k$ to Classical Optimal Control Problems and Analysis for the Convergence of Extremals}

We first reduce the linearized problems (\textbf{LEOCP})$_k$ to optimal control problems without pointwise state constraints. For sake of concision and without loss of generality, from now on we assume that $\ell = 2$, i.e. we have two times $t_1$, $t_2$. Considering the general case goes similarly (see also \cite{dmitruk2011maximum}). \\

Let $((x_{k+1},u_{k+1}))_{k \in \mathbb{N}}$ be the sequence of optimal solutions for (\textbf{LEOCP})$_{k+1}$ in $[0,t_2]$ given in Theorem \ref{ref:theoSCP}. We chop each $(x_{k+1},u_{k+1})$ by defining, for every $s \in [0,1]$ and every $k \in \mathbb{N}$,
\begin{equation} \label{ref:chopTrajectories}
\begin{split}
&a_{k+1}(s) = x_{k+1}(t_1 s) \ , \ b_{k+1}(s) = x_{k+1}((t_2 - t_1) s + t_1) \\
&v_{k+1}(s) = u_{k+1}(t_1 s) \ , \ w_{k+1}(s) = u_{k+1}((t_2 - t_1) s + t_1) .
\end{split}
\end{equation}
For every iteration $k$, consider the Reduced Linearized Embedded Optimal Control Problem (\textbf{RLEOCP})$_{k+1}$ in $\mathbb{R}^N$
\begin{eqnarray*}
\begin{cases}
\displaystyle \min \ \int^1_0 \Big( t_1 F^0_{k+1}(t_1 s,a(s),v(s)) \medskip \\
\hspace{45pt} (t_2 - t_1) F^0_{k+1}((t_2 - t_1) s + t_1,b(s),w(s)) \Big) \; \mathrm{d}s \bigskip \\
a'(s) = t_1 F_{k+1}(t_1 s,a(s),v(s))\medskip \ , \ s \in [0,1] \\
b'(s) = (t_2 - t_1) F_{k+1}((t_2 - t_1) s + t_1,b(s),w(s)) \medskip \\
a(0) = \bar{x}_0 \ , \ G^1_{k+1}(a(1)) = 0 \ , \ G^2_{k+1}(b(1)) = 0 \medskip \\
a(1) - b(0) = 0 \quad , \quad v(s) \ , \ w(s) \in U \ , \ s \in [0,1]
\end{cases}
\end{eqnarray*}
where the condition $a(1) - b(0) = 0$ translates into the continuity of admissible trajectories for (\textbf{LEOCP})$_{k+1}$ at $t_1$. Therefore, by relations \eqref{ref:chopTrajectories} and the definition of problems (\textbf{LEOCP})$_{k+1}$, each tuple $(a_{k+1},b_{k+1},v_{k+1},w_{k+1})$ is an optimal solution for problem (\textbf{RLEOCP})$_{k+1}$. Applying the classical Pontryagin Maximum Principle with transversality conditions \cite{Pontryagin1987} to each problem (\textbf{RLEOCP})$_{k+1}$ provides the existence of nontrivial tuples $(p^0_{k+1},\zeta_{k+1},\eta_{k+1})$, where $p^0_{k+1}$ are nonpositive constant and $\zeta_{k+1}$, $\eta_{k+1}$ are absolutely continuous functions in $[0,1]$, satisfying, a.e. in $[0,1]$,
\begingroup
\begin{multline} \label{ref:adjointRLEOCPk}
\zeta'_{k+1}(s) = -t_1 \bigg( \zeta^{\top}_{k+1}(s) \frac{\partial F_{k+1}}{\partial x}(t_1 s,a_{k+1}(s),v_{k+1}(s)) \medskip \\
+ p^0_{k+1} \frac{\partial F^0_{k+1}}{\partial x}(t_1 s,a_{k+1}(s),v_{k+1}(s)) \bigg)
\end{multline}
\begin{multline}
\eta'_{k+1}(s) = -(t_2 - t_1) \bigg( \medskip \\
\eta^{\top}_{k+1}(s) \frac{\partial F_{k+1}}{\partial x}((t_2 - t_1) s + t_1,b_{k+1}(s),w_{k+1}(s)) \medskip \\
+ p^0_{k+1} \frac{\partial F^0_{k+1}}{\partial x}((t_2 - t_1) s + t_1,b_{k+1}(s),w_{k+1}(s)) \bigg)
\end{multline}
\begin{multline}
t_1 \Big( \zeta_{k+1}(s) \cdot F_{k+1}(t_1 s,a_{k+1}(s),v_{k+1}(s)) \medskip \\
+ p^0_{k+1} F^0_{k+1}(t_1 s,a_{k+1}(s),v_{k+1}(s)) \Big) \medskip \\
+ (t_2 - t_1) \Big( \eta_{k+1}(s) \cdot F_{k+1}((t_2 - t_1) s + t_1,b_{k+1}(s),w_{k+1}(s)) \medskip \\
+ p^0_{k+1} F^0_{k+1}((t_2 - t_1) s + t_1,b_{k+1}(s),w_{k+1}(s)) \Big) \medskip \\
= \underset{(v,w) \in U^2}{\max} \ \bigg( t_1 \Big( \zeta_{k+1}(s) \cdot F_{k+1}(t_1 s,a_{k+1}(s),v) \medskip \\
+ p^0_{k+1} F^0_{k+1}(t_1 s,a_{k+1}(s),v) \Big) \medskip \\
+ (t_2 - t_1) \Big( \eta_{k+1}(s) \cdot F_{k+1}((t_2 - t_1) s + t_1,b_{k+1}(s),w) \medskip \\
+ p^0_{k+1} F^0_{k+1}((t_2 - t_1) s + t_1,b_{k+1}(s),w) \Big) \bigg)
\end{multline}
\begin{equation} \label{ref:transversalityRLEOCPk}
\begin{split}
\displaystyle \zeta_{k+1}(1) - &\eta_{k+1}(0) \ \perp \ \textnormal{ker} \frac{\partial G^1_{k+1}}{\partial x}(a_{k+1}(1)) \medskip \\
&\displaystyle \eta_{k+1}(1) \ \perp \ \textnormal{ker} \frac{\partial G^2_{k+1}}{\partial x}(b_{k+1}(1))
\end{split}
\end{equation}
\endgroup
which represent adjoint equations, maximality condition and transversality conditions on the adjoint vectors, respectively. \\

We now analyze convergences properties for extremals $(p^0_{k+1},a_{k+1},b_{k+1},\zeta_{k+1},\eta_{k+1},v_{k+1},w_{k+1})$ related to problems (\textbf{RLEOCP})$_{k+1}$. For this, we make use \cite[Theorem III.1]{BonalliCauligiEtAl2019}. \\

A slight modification to the proof of \cite[Theorem III.1]{BonalliCauligiEtAl2019}\footnote{\begingroup \footnotesize As a matter of fact, one needs to adapt the proof to take into account initial conditions of type $x(0) \in M_0$, where $M_0$ is a submanifold of $M$, the fact that initial and final goal sets are linearized and the fact that $F^0_{k+1}$, $F_{k+1}$ are now evaluated at $s_1 t$, $(t_2 - t_1)s + t_1$. Modifications arise only for \cite[Lemma V.2]{BonalliCauligiEtAl2019}, and straightforward computations show that the convergence of variation vectors still hold. Remark that, since the reduced problems (\textbf{RLEOCP})$_{k+1}$, (\textbf{RLEOCP}) have fixed final time, we just need that $M_0$, $M_f$ be closed, which is the case for problems (\textbf{RLEOCP})$_{k+1}$. \endgroup} ensures the existence of an extremal $(\tilde p^0,\tilde a,\tilde b,\tilde \zeta,\tilde \eta,\tilde v,\tilde w)$ for the Reduced Embedded Optimal Control Problem (\textbf{REOCP})
\begin{eqnarray*}
\begin{cases}
\displaystyle \min \ \int^1_0 \Big( t_1 F^0(a(s),v(s)) + (t_2 - t_1) F^0(b(s),w(s)) \Big) \; \mathrm{d}s \bigskip \\
a'(s) = t_1 F(a(s),v(s))\medskip \ , \ s \in [0,1] \\
b'(s) = (t_2 - t_1) F(b(s),w(s)) \medskip \\
a(0) = \bar{x}_0 \ , \ G^1(a(1)) = 0 \ , \ G^2(b(1)) = 0 \medskip \\
a(1) - b(0) = 0 \quad , \quad v(s) \ , \ w(s) \in U \ , \ s \in [0,1]
\end{cases}
\end{eqnarray*}
therefore satisfying the following, adjoint equations, maximality condition and transversality conditions, respectively,
\begingroup
\begin{multline} \label{ref:adjointRLEOCP}
\tilde \zeta'(s) = -t_1 \bigg( \tilde \zeta^{\top}(s) \frac{\partial F}{\partial x}(\tilde a(s),\tilde v(s)) + \tilde p^0 \frac{\partial F^0}{\partial x}(\tilde a(s),\tilde v(s)) \bigg)
\end{multline}
\begin{multline} \label{ref:adjointRLEOCP1}
\tilde \eta'(s) = -(t_2 - t_1) \bigg( \tilde \eta^{\top}(s) \frac{\partial F}{\partial x}(\tilde b(s),\tilde w(s)) \medskip \\
+ \tilde p^0 \frac{\partial F^0}{\partial x}(\tilde b(s),\tilde w(s)) \bigg)
\end{multline}
\begin{multline} \label{ref:maxRLEOCP}
t_1 \Big( \tilde \zeta(s) \cdot F(\tilde a(s),\tilde v(s)) + \tilde p^0 F^0(\tilde a(s),\tilde v(s)) \Big) \medskip \\
+ (t_2 - t_1) \Big( \tilde \eta(s) \cdot F(\tilde b(s),\tilde w(s)) + \tilde p^0 F^0(\tilde b(s),\tilde w(s)) \Big) \medskip \\
= \underset{(v,w) \in U^2}{\max} \ \bigg( t_1 \Big( \tilde \zeta(s) \cdot F(\tilde a(s),v) + \tilde p^0 F^0(\tilde a(s),v) \Big) \medskip \\
+ (t_2 - t_1) \Big( \tilde \eta(s) \cdot F(\tilde b(s),w) + \tilde p^0 F^0(\tilde b(s),w) \Big) \bigg)
\end{multline}
\begin{multline} \label{ref:transversalityRLEOCP}
\displaystyle \tilde \zeta(1) - \tilde \eta(0) \perp \textnormal{ker} \frac{\partial G^1}{\partial x}(\tilde a(1)) \ , \ \tilde \eta(1) \perp \textnormal{ker}\frac{\partial G^2}{\partial x}(\tilde b(1)) .
\end{multline}
\endgroup
Moreover, up to some subsequence,
\begin{eqnarray} \label{ref:convergences}
\begin{cases}
\bullet \ p^0_k \rightarrow \tilde p^0 \\
\bullet \ (a_k,b_k) \rightarrow (\tilde a,\tilde b) \ \textnormal{, for the strong topology of} \ C^0 \\
\bullet \ (\zeta_k,\eta_k) \rightarrow (\tilde \zeta,\tilde \eta) \ \textnormal{, for the strong topology of} \ C^0 \\
\bullet \ (v_k,w_k) \rightarrow (\tilde v,\tilde w) \ \textnormal{, for the weak topology of} \ L^2 .
\end{cases}
\end{eqnarray}

\begin{remark}
Let $e : M \rightarrow \mathbb{R}^N$ be the canonical immersion. By definition, $(G^1)^{-1}(0)$, $(G^2)^{-1}(0)$ are submanifolds of $\mathbb{R}^N$ of dimension $N-r_1$, $N-r_2$, respectively, while $(g^1)^{-1}(0) = e^{-1}((G^1)^{-1}(0))$, $(g^2)^{-1}(0) = e^{-1}((G^2)^{-1}(0))$ are submanifolds of $M$ of dimension $n-r_1$, $n-r_2$, respectively. In particular,
\begin{equation} \label{ref:consistencyCond}
(G^1)^{-1}(0) \cap M = g^{-1}_1(0) \ , \ (G^2)^{-1}(0) \cap M = g^{-1}_2(0) .
\end{equation}
By transversality arguments\footnote{\begingroup \footnotesize As a matter of fact, let $\varphi$ be a chart of $x_i$ in $\mathbb{R}^N$, adapted to $M$. In these coordinates, one has $d_{x_i}G^i(\partial^j_{x_i}) = d_{x_i}g_i(\partial^j_{x_i})$, for $j=1,\dots,n$. If $v = \sum^n_{j=1} v_j \partial^j_{x_i} \in T_{x_i}M$ such that $d_{x_i}G^i(v) = 0$, we have $d_{x_i}g_i(v) = \sum^n_{j=1} v_j d_{x_i}g_i(\partial^j_{x_i}) = \sum^n_{j=1} v_j d_{x_i}G^i(\partial^j_{x_i}) = d_{x_i}G^i(v) = 0$. The conclusion then follows. \endgroup}, for $x_1 \in g^{-1}_1(0)$, $x_2 \in g^{-1}_2(0)$,
\begingroup
\begin{equation} \label{ref:intersectionTangentSpace}
T_{x_i}(G^i)^{-1}(0) \cap T_{x_i}M = T_{x_i}g^{-1}_i(0) \quad , \quad i =1,2
\end{equation}
\endgroup
which, thanks to Grassmann formula, gives
\begingroup
\begin{equation} \label{ref:sumTangentSpace}
\mathbb{R}^N = T_{x_i}(G^i)^{-1}(0) + T_{x_i}M \quad , \quad i=1,2 .
\end{equation}
\endgroup
\end{remark}

The next step consists of taking advantage of relations \eqref{ref:adjointRLEOCP}-\eqref{ref:transversalityRLEOCP} to recover an extremal for problem (\textbf{EOCP}). This goes by properly gluing the coordinates of the tuple $(\tilde p^0,\tilde a,\tilde b,\tilde \zeta,\tilde \eta,\tilde v,\tilde w)$ as follows. For every $t \in [0,t_2]$, define
$$
\tilde x(t) = \bigg\{ \begin{array}{cc}
\tilde a(t / t_1) & 0 \le t < t_1 \\
\tilde b((t - t_1) / (t_2 - t_1)) & t_1 \le t \le t_2
\end{array}
$$
$$
\tilde \gamma(t) = \bigg\{ \begin{array}{cc}
\tilde \zeta(t / t_1) & 0 \le t < t_1 \\
\tilde \eta((t - t_1) / (t_2 - t_1)) & t_1 \le t \le t_2 .
\end{array}
$$
$$
\tilde u(t) = \bigg\{ \begin{array}{cc}
\tilde v(t / t_1) & 0 \le t < t_1 \\
\tilde w((t - t_1) / (t_2 - t_1)) & t_1 \le t \le t_2
\end{array}
$$
Thanks to the structure of (\textbf{REOCP}), one sees that $(\tilde x,\tilde u)$ is feasible for (\textbf{EOCP}). Therefore, the tuple $(\tilde p^0,\tilde \lambda = (\tilde x,\tilde \gamma),\tilde u)$, where $(\tilde p^0,\tilde \gamma)$ is not trivial, is an extremal for problem (\textbf{EOCP}), i.e., it satisfies Theorem \ref{ref:theo_PMPGEO} in the case $M = \mathbb{R}^N$. Indeed, Remark \ref{remarkAdjoint} ensures that \eqref{ref:adjointRLEOCP}-\eqref{ref:adjointRLEOCP1} are equivalent to the adjoint equations \eqref{ref:adjointEqGEO} in the variable $\tilde \lambda$ in $[0,t_2]$, while, the independence of variables $(u,v) \in U^2$ and \eqref{ref:maxRLEOCP} leads to the maximality condition \eqref{ref:maxCondGEO}. Finally, since $g^{-1}_i(0) \subseteq (G^i)^{-1}(0)$, relations \eqref{ref:transversalityRLEOCP}, \eqref{ref:intersectionTangentSpace} gives the transversality condition \eqref{ref:trasv1GEO}. \\

By defining
$$
\gamma_k(t) = \bigg\{ \begin{array}{cc}
\zeta_k(t / t_1) & 0 \le t < t_1 \\
\eta_k((t - t_1) / (t_2 - t_1)) & t_1 \le t \le t_2 ,
\end{array}
$$
we remark that relations \eqref{ref:adjointRLEOCPk}-\eqref{ref:transversalityRLEOCPk} show that the tuple $(p^0_k,\lambda_k = (x_k,\gamma_k),u_k)$, where $(p^0_k,\gamma_k)$ is not trivial, represents an extremal for problem (\textbf{LEOCP})$_k$ and that \eqref{ref:convergences} leads to
\begin{eqnarray*}
\begin{cases}
\bullet \ x_k \rightarrow \tilde x \ \textnormal{, for the strong topology of} \ C^0 \\
\bullet \ \gamma_k|_{[0,t_1)} \rightarrow \tilde \gamma|_{[0,t_1)} \ \textnormal{, for the strong topology of} \ C^0 \\
\bullet \ \gamma_k|_{[t_1,t_2]} \rightarrow \tilde \gamma|_{[t_1,t_2]} \ \textnormal{, for the strong topology of} \ C^0 \\
\bullet \ u_k \rightarrow \tilde u \ \textnormal{, for the weak topology of} \ L^2
\end{cases}
\end{eqnarray*}
giving the convergence statement provided in Theorem \ref{ref:theoSCP}.

\subsection{Projection onto a Geometric Extremal for (\textbf{OCP})}

The main objective of this section is to prove that the piecewise absolutely continuous curve $\tilde \lambda$ can be conveniently projected onto $T^*M$, providing then the sought geometric extremal $(\tilde p^0,\tilde p,\tilde u)$ for the original problem (\textbf{OCP}). \\

We first introduce a useful projection operator. Consider the usual cotangent bundles $T^*M$ and $T^*\mathbb{R}^N$, and define
$$
T^* \mathbb{R}^N|_M = \bigcup_{q \in M} \ \{ q \} \times T^*_q \mathbb{R}^N .
$$
Equipped with the structure of pullback bundle, the projection $\pi_M : T^*\mathbb{R}^N|_M \rightarrow M$ is a vector bundle of rank $N$. In particular, $T^*M$ is identified to a subbundle of $T^*\mathbb{R}^N|_M$. We build a projection operator from cotangent spaces by considering the usual orthogonal projection. For this, let $q \in M$ and $(V,\varphi)$ be a local chart of $q$ in $\mathbb{R}^N$ adapted to $M$, i.e., $\varphi(V \cap M) = \varphi(V) \cap \mathbb{R}^{n} \times \{ 0 \}^{N-n}$. Then, $\{ dx^j(\cdot) \}_{j=1,\dots,N}$ is a local basis for $T^*\mathbb{R}^N|_M$ and $\{ dx^j(\cdot) \}_{j=1,\dots,n}$ is a local basis for $T^*M$ around $q$. Consider the cometric $\langle \cdot , \cdot \rangle(\cdot)$ in $T^*\mathbb{R}^N|_M$ induced by the Euclidean scalar product of $\mathbb{R}^N$. The Gram-Schmidt process on $\{ dx^j(\cdot) \}_{j=1,\dots,N}$ provides an orthonormal frame $\{ E_j(\cdot) \}_{j=1,\dots,N}$ for $T^*\mathbb{R}^N|_M$ in $V$, satisfying
\begingroup
\begin{equation} \label{ref:Span}
\textnormal{span} \langle E_1(\cdot) , \dots , E_j(\cdot) \rangle|_{V \cap M} = \textnormal{span} \langle dx^1(\cdot) , \dots , dx^j(\cdot) \rangle
\end{equation}
\endgroup
for every $1 \le j \le n$. From the previous results, when restricted to $V \cap M$, the orthogonal projection operator
\begingroup
\begin{equation} \label{ref:Project}
\begin{split}
\textnormal{Pr} : \ &T^* \mathbb{R}^{N+1}|_{(\mathbb{R} \times M)} \rightarrow T^*(\mathbb{R} \times M) \\
&(p^0,x,\gamma) \mapsto \bigg(p^0,\sum_{j=1}^{n} \langle \gamma , E_j(x) \rangle_x \ E_j(x) \bigg)
\end{split}
\end{equation}
\endgroup
is well-defined and smooth. Finally, since the change of frame mapping between two orthonormal frames is orthogonal, thanks to \eqref{ref:Span}, it is easy to see that the expression \eqref{ref:Project} is globally well-defined and smooth. \\

At this step, recall that any admissible trajectory for (\textbf{EOCP}) is also admissible for (\textbf{OCP}) (see Lemma \ref{ref:Microlemma}). Therefore, it holds $\tilde x([0,t_2]) \subseteq M$. By using the previous projection operator, this allows one to define the following piecewise absolutely continuous covector curve
$$
\tilde p : [0,t_2] \rightarrow T^*M : t \mapsto \pi_2 \circ \textnormal{Pr}(\tilde p^0,\tilde \lambda)
$$
where $\pi_2 : T^*(\mathbb{R} \times M) \rightarrow T^*M : (p^0,p) \mapsto p$. As a standard result for Hamiltonian systems, the curve $\tilde p$ satisfies the adjoint equations \eqref{ref:adjointEqGEO} in $M$ related to the Hamiltonian \eqref{ref:AugmentedHamiltonian}. This derives by combining Remark \ref{remarkAdjoint} (compare with the local system \eqref{ref:adjointDarboux}) with the following lemma, whose proof is reported here for sake of completeness (see \cite{Bonalli2018} for a more general statement).

\begin{lemma}
For almost every $t \in [0,t_2]$, choose a local chart $(U,\phi)$ of $(\tilde x^0,\tilde x)(t)$ in $\mathbb{R}^{N+1}$ (see system \eqref{ref:AugmentedAffineDynamics}) adapted to $\mathbb{R} \times M$. For every $i=0,\dots,n$, it holds
\begingroup
\begin{multline*}
\frac{d}{dt} \bigg( \textnormal{Pr} \Big( (\exp_{(F^0_{\tilde u},F_{\tilde u})}(t_2;t,\cdot))^*_{(\tilde x^0,\tilde x)(t_2)} \cdot \\
(\tilde p^0,\tilde \lambda(t_2)) \Big) \bigg( \frac{\partial}{\partial x^i} \bigg|_{(\tilde x^0,\tilde x)(t)} \bigg) \bigg)(t) = \\
-\sum_{j=0}^{n} \frac{\partial (f^0,f)_j}{\partial x^i}(\tilde x(t),\tilde u(t)) \bigg( \textnormal{Pr} \Big( (\exp_{(F^0_{\tilde u},F_{\tilde u})}(t_2;t,\cdot))^*_{(\tilde x^0,\tilde x)(t_2)} \cdot \\
(\tilde p^0,\tilde \lambda(t_2)) \Big)
\bigg( \frac{\partial}{\partial x^j} \bigg|_{(\tilde x^0,\tilde x)(t)} \bigg) \bigg)
\end{multline*}
\endgroup
where $(\cdot)^*$ is the pullback operator for 1-forms.
\end{lemma}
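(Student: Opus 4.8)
The plan is to read the claimed identity as the coordinate expression, in the adapted chart, of the intrinsic Hamiltonian adjoint equation on $T^*M$, and to derive it from the corresponding equation in the ambient space $\mathbb{R}^{N+1}$ by exploiting the tangency of the extended dynamics to $\mathbb{R}\times M$. The three ingredients are: the ambient adjoint equation for the (unprojected) pulled-back covector, the decoupling of its tangential block caused by tangency, and the identification of the orthogonally projected components with the tangential ones.

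First I would invoke the standard fact from the theory of Hamiltonian systems (see \cite{AgrachevSachkov2004}) that the unprojected pulled-back covector $t\mapsto \Lambda(t):=(\exp_{(F^0_{\tilde u},F_{\tilde u})}(t_2;t,\cdot))^*_{(\tilde x^0,\tilde x)(t_2)}(\tilde p^0,\tilde\lambda(t_2))$ solves the ambient adjoint system of the extended Hamiltonian in $\mathbb{R}^{N+1}$. Differentiating the defining pullback identity and writing it in the chart $(U,\phi)$ gives, for every $i=0,\dots,N$, the linear equation $\frac{d}{dt}\big(\Lambda(t)(\partial/\partial x^i)\big)=-\sum_{j=0}^{N}\frac{\partial (F^0,F)_j}{\partial x^i}(\tilde x(t),\tilde u(t))\,\Lambda(t)(\partial/\partial x^j)$, with the constancy of the $x^0$-component reproducing the fact that $\tilde p^0$ is constant (Remark \ref{remarkAdjoint}).

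The second step is the decoupling of the tangential block. By the Embedding Lemma (Lemma \ref{ref:Microlemma}) one has $F_j|_M=f_j$ and $F^0|_M=f^0$, so the extended field $(F^0,F)$ is tangent to $\mathbb{R}\times M$ along $\mathbb{R}\times M$. In the adapted chart this forces the transverse components $(F^0,F)_j$ to vanish identically on $\mathbb{R}\times M$ for $j=n+1,\dots,N$; since differentiating such a component along a tangential direction $\partial/\partial x^i$ with $i\le n$ keeps one on $\mathbb{R}\times M$, we get $\frac{\partial (F^0,F)_j}{\partial x^i}=0$ there whenever $i\le n<j$. Restricting the ambient adjoint equation to $i=0,\dots,n$ therefore collapses the sum to $j=0,\dots,n$, and, because $(F^0,F)|_M=(f^0,f)$, it produces exactly the right-hand side of the claimed identity, but expressed through the unprojected components $\Lambda(t)(\partial/\partial x^j)$.

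It remains to replace these by the projected components $\textnormal{Pr}(\cdots)(\partial/\partial x^i)$. Choosing the adapted chart to be orthonormal along $\mathbb{R}\times M$ --- equivalently, using the Gram-Schmidt coframe $\{E_j\}$ so that the span property \eqref{ref:Span} trivializes on $V\cap M$ --- the orthogonal projection \eqref{ref:Project} onto $T^*(\mathbb{R}\times M)$ coincides on $\mathbb{R}\times M$ with evaluation against tangent vectors, whence $\textnormal{Pr}(\Lambda(t))(\partial/\partial x^i)=\Lambda(t)(\partial/\partial x^i)$ for $i\le n$ all along the trajectory, which lies on $M$. Substituting this equality on both sides closes the argument. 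I expect the main obstacle to be precisely this last step for a \emph{general} adapted chart: there the cometric induced by the Euclidean structure mixes tangential and transverse coframe elements, so $\textnormal{Pr}$ does not reduce to plain restriction, and differentiating $\textnormal{Pr}(\Lambda)$ generates extra connection-type terms from the variation of the frame $E_j(\tilde x(t))$ along the trajectory. One must show that these terms cancel, using orthonormality of $\{E_j\}$, the span relation \eqref{ref:Span}, and once more the tangency of $(F^0,F)$ to $\mathbb{R}\times M$ (so that the normal frame components never feed back into the tangential ones); this is the content of the more general statement in \cite{Bonalli2018}, and selecting an orthonormal-on-$M$ chart is the cleanest way to sidestep the bookkeeping.
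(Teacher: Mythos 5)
Your overall strategy coincides with the paper's: both arguments rest on (i) the ambient adjoint dynamics satisfied by the pulled-back covector $\Lambda(t)=(\exp_{(F^0_{\tilde u},F_{\tilde u})}(t_2;t,\cdot))^*(\tilde p^0,\tilde\lambda(t_2))$, (ii) the block-triangular structure induced by tangency of $(F^0,F)$ to $\mathbb{R}\times M$ (your vanishing of $\partial (F^0,F)_j/\partial x^i$ for $i\le n<j$ plays exactly the role of \eqref{ref:nullComponents}--\eqref{ref:spanExp}, and your identification $\partial(F^0,F)_j/\partial x^i=\partial(f^0,f)_j/\partial x^i$ for tangential $i$ is verbatim in the paper), and (iii) the identification of the projected components $\textnormal{Pr}(\Lambda)(\partial/\partial x^i)$, $i\le n$, with the coordinate components of $\Lambda$. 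The only presentational difference is that the paper re-derives the ambient adjoint ODE by differentiating the $t$-independent identity $(\tilde p^0,\tilde\lambda(t_2))(\partial/\partial x^i|_{t_2})=\textnormal{const}$ rather than quoting it, which lets it work with the projected coefficients $a_i$ from the outset.

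The genuine gap is in your step (iii). You propose to "choose the adapted chart to be orthonormal along $\mathbb{R}\times M$" so that $\textnormal{Pr}$ reduces to dropping the transverse components. Such a chart does not exist in general: requiring the coordinate coframe $\{dx^j\}$ to be orthonormal on all of $V\cap M$ forces $g_{ij}=\delta_{ij}$ along $V\cap M$, hence forces the induced metric on $M$ to be flat there --- already false for $M=S^3$ in the running example (\textbf{Ex}). What is actually needed is much weaker, namely that $\textnormal{span}(dx^{n+1},\dots,dx^N)$ be the orthogonal complement of $\textnormal{span}(dx^1,\dots,dx^n)$ along $M$ (equivalently, that the transverse coordinate directions be normal to $M$, as in a tubular-neighborhood chart, or that the Gram--Schmidt frame satisfy the span relation \eqref{ref:Span}); under that condition the orthogonal complement annihilates $\partial/\partial x^i$ for $i\le n$, so $\textnormal{Pr}(\Lambda)(\partial/\partial x^i)=\Lambda(\partial/\partial x^i)$ pointwise along the trajectory and no extra frame-derivative terms arise when differentiating in $t$ (these are scalar functions of $t$, so equality for all $t$ suffices --- your worry about connection-type corrections is moot once the pointwise identity is secured). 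This is precisely the content of the paper's step "$a_j(t)=b_j(t)$, $j=0,\dots,n$" obtained from \eqref{ref:Span} and the definition \eqref{ref:Project}; as written, your argument either relies on an unavailable chart or defers the cancellation to a citation, so the key identification is not actually established.
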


\addtolength{\textheight}{-8cm}   % This command serves to balance the column lengths
                                  % on the last page of the document manually. It shortens
                                  % the textheight of the last page by a suitable amount.
                                  % This command does not take effect until the next page
                                  % so it should come on the page before the last. Make
                                  % sure that you do not shorten the textheight too much.

\begin{proof}
For every $i=0,\dots,n$, denote $a_i(t) = \textnormal{Pr} \big( (\exp_{(F^0_{\tilde u},F_{\tilde u})}(t_2;t,\cdot))^*_{(\tilde x^0,\tilde x)(t_2)} \cdot (\tilde p^0,\tilde \lambda(t_2)) \big) \big( \frac{\partial}{\partial x^i} \big|_{(\tilde x^0,\tilde x)(t)} \big)$ and let $(V,\varphi)$ be an adapted local chart of $(\tilde x^0,\tilde x)(t_2)$. Since, for appropriate coefficients $b_i(t)$, the following relation holds
\begin{multline} \label{ref:linearCombinationAdjoint}
(\exp_{(F^0_{\tilde u},F_{\tilde u})}(t_2;t,\cdot))^*_{(\tilde x^0,\tilde x)(t_2)} \cdot (\tilde p^0,\tilde \lambda(t_2)) = \\ \sum_{j=0}^{N} b_j(t) dx^j|_{(\tilde x^0,\tilde x)(t)} \in T^* \mathbb{R}^{N+1}|_{\mathbb{R} \times M}
\end{multline}
from the span relation \eqref{ref:Span}, we obtain
\begin{multline*}
\textnormal{Pr} \big( (\exp_{(F^0_{\tilde u},F_{\tilde u})}(t_2;t,\cdot))^*_{(\tilde x^0,\tilde x)(t_2)} \cdot (\tilde p^0,\tilde \lambda(t_2)) \big) = \\
\sum_{j=0}^{n} b_j(t) dx^j|_{(\tilde x^0,\tilde x)(t)} \quad \scalebox{1.25}[1.25]{$\Longrightarrow$} \quad a_j(t) = b_j(t) \ , \ j=0,\dots,n .
\end{multline*}
By inverting expression \eqref{ref:linearCombinationAdjoint}, one then has
\begin{multline*}
(\tilde p^0,\tilde \lambda(t_2)) = \\
\sum_{j=0}^{n} a_j(t) (\exp_{(F^0_{\tilde u},F_{\tilde u})}(t;t_2,\cdot))^*_{(\tilde x^0,\tilde x)(t)} \cdot dx^j|_{(\tilde x^0,\tilde x)(t)} \\
+ \sum_{j=n+1}^{N} b_j(t) (\exp_{(F^0_{\tilde u},F_{\tilde u})}(t;t_2,\cdot))^*_{(\tilde x^0,\tilde x)(t)} \cdot dx^j|_{(\tilde x^0,\tilde x)(t)} .
\end{multline*}
However, since the trajectory $(\tilde x^0,\tilde x)(t) = \exp_{(F^0_{\tilde u},F_{\tilde u})}(t;t_2,\tilde x(t_2))$ lies entirely in $\mathbb{R} \times M$ and the chart $(U,\phi)$ is adapted to $\mathbb{R} \times M$, one sees that, for every $i=0,\dots,n$ and $j \ge n+1$, it holds
\begin{equation} \label{ref:nullComponents}
\begin{split}
&(\exp_{(F^0_{\tilde u},F_{\tilde u})}(t;t_2,\cdot))^*_{(\tilde x^0,\tilde x)(t)} \cdot dx^j|_{(\tilde x^0,\tilde x)(t)} \bigg( \frac{\partial}{\partial x^i} \bigg|_{(\tilde x^0,\tilde x)(t_2)} \bigg) \\
&= \frac{\partial}{\partial x^i} (x^j \circ \exp_{(F^0_{\tilde u},F_{\tilde u})}(t;t_2,\cdot) \circ \varphi^{-1})(\varphi((\tilde x^0,\tilde x)(t_2))) = 0 .
\end{split}
\end{equation}
Therefore, for every $i=0,\dots,n$, we have
\begin{multline*}
(\tilde p^0,\tilde \lambda(t_2)) \bigg( \frac{\partial}{\partial x^i} \bigg|_{(\tilde x^0,\tilde x)(t_2)} \bigg) = \\
\sum_{j=0}^{n} a_j(t) \frac{\partial}{\partial x^i} (x^j \circ \exp_{(F^0_{\tilde u},F_{\tilde u})}(t;t_2,\cdot) \circ \varphi^{-1})(\varphi((\tilde x^0,\tilde x)(t_2))) .
\end{multline*}
The term on the left of the previous expression does not depend on $t$, and then, by differentiating it w.r.t. $t$\footnote{Remark that, as soon as $0 \leqslant i \leqslant n$, quantities in \eqref{ref:eqFinalLemma} evolve in $M$. Therefore, indices grater than $n$ do not explicitly appear in computations.},
\begingroup
\begin{multline} \label{ref:eqFinalLemma}
\sum_{j=0}^{n} \bigg[ \dot{a}_j(t) \bigg( (\exp_{(F^0_{\tilde u},F_{\tilde u})}(t;t_2,\cdot))^*_{(\tilde x^0,\tilde x)(t)} \cdot \\
dx^j|_{(\tilde x^0,\tilde x)(t)} \bigg( \frac{\partial}{\partial x^i} \bigg|_{(\tilde x^0,\tilde x)(t_2)} \bigg) \bigg) + \\
\sum_{\ell=0}^{n} a_j(t) \frac{\partial (F^0,F)_j}{\partial x^{\ell}}(\tilde x(t),\tilde u(t)) \bigg( (\exp_{(F^0_{\tilde u},F_{\tilde u})}(t;t_2,\cdot))^*_{(\tilde x^0,\tilde x)(t)} \cdot \\
dx^{\ell}|_{(\tilde x^0,\tilde x)(t)} \bigg( \frac{\partial}{\partial x^i} \bigg|_{(\tilde x^0,\tilde x)(t_2)} \bigg) \bigg) \bigg] = 0 \ .
\end{multline}
and this must hold for every $i=0,\dots,n$. At this step, for the evaluation of $\displaystyle \frac{\partial (F^0,F)_j}{\partial x^{\ell}}(\tilde x(t),\tilde u(t))$, we notice that, in the case $\ell=0,\dots,n$, any variation point $\phi^{-1}(y_0,\dots,y_{\ell}+h,\dots,y_N)$, where $\phi^{-1}(y_0,\dots,y_{\ell},\dots,y_N) \in \mathbb{R} \times M$, belongs to $\mathbb{R} \times M$ because $\phi$ is adapted to $\mathbb{R} \times M$, and then, from the construction of $(F^0,F)$, we obtain $\displaystyle \frac{\partial (F^0,F)_j}{\partial x^{\ell}}(\tilde x(t),\tilde u(t)) = \displaystyle \frac{\partial (f^0,f)_j}{\partial x^{\ell}}(\tilde x(t),\tilde u(t))$. Moreover, by using \eqref{ref:nullComponents}, one has
\begingroup
\begin{multline*}
\hspace{-10pt} (\exp_{(F^0_{\tilde u},F_{\tilde u})}(t;t_2,\cdot))^*_{(\tilde x^0,\tilde x)(t)} \cdot \textnormal{span}\Big(dx^{n+1}|_{(\tilde x^0,\tilde x)(t)},\dots,dx^N|_{(\tilde x^0,\tilde x)(t)}\Big) \\
\subseteq \textnormal{span}\Big(dx^{n+1}|_{(\tilde x^0,\tilde x)(t_2)},\dots,dx^N|_{(\tilde x^0,\tilde x)(t_2)}\Big)
\end{multline*}
\endgroup
and, since $(\exp_{(F^0_{\tilde u},F_{\tilde u})}(t;t_2,\cdot))^*_{(\tilde x^0,\tilde x)(t)}$ is an isomorphism,
\begingroup
\begin{multline} \label{ref:spanExp}
\hspace{-10pt} (\exp_{(F^0_{\tilde u},F_{\tilde u})}(t;t_2,\cdot))^*_{(\tilde x^0,\tilde x)(t)} \cdot \textnormal{span}\Big(dx^0|_{(\tilde x^0,\tilde x)(t)},\dots,dx^n|_{(\tilde x^0,\tilde x)(t)}\Big) \\
\subseteq \textnormal{span}\Big(dx^0|_{(\tilde x^0,\tilde x)(t_2)},\dots,dx^n|_{(\tilde x^0,\tilde x)(t_2)}\Big) .
\end{multline}
\endgroup
Combining \eqref{ref:spanExp} with \endgroup\eqref{ref:eqFinalLemma} gives
\begingroup
\begin{multline*}
\sum_{j=0}^{n} \bigg[ \dot{a}_j(t) dx^j|_{(\tilde x^0,\tilde x)(t)} \\
+ \sum_{\ell=0}^{n} a_j(t) \frac{\partial (f^0,f)_j}{\partial x^{\ell}}(\tilde x(t),\tilde u(t)) dx^{\ell}|_{(\tilde x^0,\tilde x)(t)} \bigg] = 0 .
\end{multline*}
\endgroup
and the desired result follows straightforwardly.
\end{proof}

\vspace{10pt}

As last steps, we need to verify the maximality condition \eqref{ref:maxCondGEO}, the transversality conditions \eqref{ref:trasv1GEO} and the fact that the couple $(\tilde p^0,\tilde p)$ is not trivial. The first two facts follow by simple computations on relations \eqref{ref:maxRLEOCP}, \eqref{ref:transversalityRLEOCP}, respectively\footnote{\begingroup \footnotesize As a matter of fact, the transversality condition on the adjoint vector \eqref{ref:trasv1GEO} can be checked to hold for $\tilde p$ as follows. For every orthonormal frame $\{ E_1,\dots,E_N \}$ in $T^*_{\tilde x(t_i)}\mathbb{R}^N$ such that $\{ E_1,\dots,E_n \}$ spans $T^*_{\tilde x(t_i)}M$, the dual frames $\{ E^*_1,\dots,E^*_N \}$ and $\{ E^*_1,\dots,E^*_n \}$ span $T^{**}_{\tilde x(t_i)}\mathbb{R}^N \cong T_{\tilde x(t_i)}\mathbb{R}^N$ and $T^{**}_{\tilde x(t_i)}M \cong T_{\tilde x(t_i)}M$, respectively. Then, for $v \in T_{\tilde x(t_i)}g^{-1}_i(0) \subseteq T_{\tilde x(t_i)}M$ and $\lambda \in T^*_{\tilde x(t_i)}\mathbb{R}^N$, using these orthonormal basis and relations \eqref{ref:transversalityRLEOCP} one verifies straightforwardly that $\lambda(v) = (\pi_2\circ\textnormal{Pr})(\tilde p^0,\lambda)(v)$, from which the sought result follows. The previous equality is also used in what follows. \endgroup}.

By contradiction, assume that $(\tilde p^0,\tilde p) = 0$. In particular, $\tilde p(t^-_1) - \tilde p(t^+_1) = 0$ and $\tilde p(t_2) = 0$. On the other hand, the transversality conditions \eqref{ref:transversalityRLEOCP} read $\tilde \lambda(t^-_1) - \tilde \lambda(t^+_1) \perp T_{\tilde x(t_1)} (G^1)^{-1}(0)$ and $\tilde \lambda(t_2) \perp T_{\tilde x(t_2)} (G^2)^{-1}(0)$. Therefore, relations \eqref{ref:sumTangentSpace} provide that $\tilde \lambda(t^-_1) - \tilde \lambda(t^+_1) = 0$ and $\tilde \lambda(t_2) = 0$, which imply that $\tilde \gamma : [0,t_2] \rightarrow \mathbb{R}^N$ is a continuous curve which is zero at $t_2$ and satisfies a linear ODE. By uniqueness, we have $(\tilde p^0,\tilde \gamma) = 0$, which is a contradiction. \\

Theorem \ref{ref:theoSCP} is proved.}{}

\end{document}